\tikzstyle{mybox} = [draw=black, fill=white,  thick,
\tikzstyle{mybox} = [draw=black, fill=white,  thick,
\tikzstyle arrowstyle=[scale=1]
\tikzstyle directed=[postaction={decorate,decoration={markings,
		mark=at position .65 with {\arrow[arrowstyle]{stealth}}}}]
\tikzstyle reverse directed=[postaction={decorate,decoration={markings,
		mark=at position .65 with {\arrowreversed[arrowstyle]{stealth};}}}]
\newcommand{\boundellipse}[3]
{(#1) ellipse (#2 and #3)
}
\newtheorem{theorem}{Theorem}
\newtheorem{lemma}{Lemma}
\newtheorem{corollary}{Corollary}
\newtheorem{proposition}{Proposition}
\theoremstyle{definition}
\newtheorem{definition}{Definition}
\newtheorem{remark}{Remark}
\newtheorem{example}{Example}
\begin{document}

\title{Solution of Real Cubic Equations without Cardano's Formula}

\author{Bahman Kalantari\footnote{Emeritus Professor of Computer Science, Rutgers University, Piscataway, New Jersey, U.S.A}}
\date{}
\maketitle

\begin{abstract}
Building on a classification of zeros of cubic equations due to the $12$-th century Persian mathematician Sharaf al-Din Tusi, together with Smale's theory of {\it point estimation}, we derive an efficient recipe for computing high-precision approximation to a real root of an arbitrary real cubic equation. First, via reversible transformations we reduce any real cubic equation into one of four canonical forms with $0$, $\pm 1$ coefficients, except for the constant term as $\pm q$, $q \geq 0$.  Next, given any form, if $\rho_q$ is an approximation to $\sqrt[3]{q}$ to within a relative error of five percent, we prove a {\it seed} $x_0$ in  $\{ \rho_q, \pm .95 \rho_q, -\frac{1}{3}, 1 \}$ can be selected such that in $t$ Newton iterations
$|x_t - \theta_q| \leq \sqrt[3]{q}\cdot 2^{-2^{t}}$ for some real root $\theta_q$. While computing a good seed, even for approximation of $\sqrt[3]{q}$, is considered to be ``somewhat of black art'' (see Wikipedia), as we justify, $\rho_q$ is readily computable from {\it mantissa} and {\it exponent} of $q$. It follows that the above approach gives a simple recipe for numerical approximation of solutions of real cubic equations independent of Cardano's formula.
\end{abstract}

{\rm Keywords:}  Cardano' Formula, Newton Method, Smale's Point Estimation\\

MSC: Primary 65H04, Secondary 26C10

 \section{Introduction.} \label{sec1}

Any cubic equation with real coefficients via affine transformation
can be written in the {\it reduced form} (also called  {\it depressed form}) shown below together with the corresponding {\it Cardano's formula} for a root:
\begin{equation} \label{cardano}
P(x)=x^3+px+q=0, \quad
\theta=\sqrt[3]{-\frac{q}{2} + \sqrt{- \Delta }}  +  \sqrt[3]{-\frac{q}{2} - \sqrt{- \Delta }},
\end{equation}
where $\Delta = -(\frac{q^2}{4} + \frac{p^3}{27})$ is the {\it discriminant}.  Cardano's formula is  credited to several Italians that include,  del Ferro, Tartaglia and Cardano himself who published it in his famous book {\it Ars Magna} in 1545.  For the history behind the formula see  e.g., Irving \cite{Irving} and Katz \cite{Katz}. A surprising by-product of the formula is the emergence of  complex numbers, where $\sqrt{-1}$ was treated as an entity by Bombelli who further analyzed Cardano's book.  In particular, the formula expresses a root as a real number if and only if $\Delta \leq 0$. Otherwise, the three real roots are expressed in terms of complex numbers.  There is a vast literature on how to compute the solutions using Cardano's formula. For example, Zucker \cite{Zucker} suggests an approach to bypass the use of trigonometric functions in DeMoivre's theorem but at the cost of using transcendental functions with transcendental arguments.

While Cardano's formula gives an algebraic expression for the solution of a cubic equation, it does not provide a numerical approximation.  Even if the solution is $\sqrt[3]{q}$ we still need to resort to an iterative method such as Newton's.
Despite the historical significance of the discovery of Cardano's formula and associated mathematical discoveries, it is still valid to ask if in order to approximate a root of a cubic equation it is necessary to use the formula, especially if we can find a direct and efficient algorithm.  For instance, we may consider applying Newton method directly to $P(x)$, where the corresponding {\it iteration function} and {\it orbit} at a {\it seed} $x_0$ are:
\begin{equation}
N_P(x)=x-\frac{P(x)}{P'(x)}, \quad O^+(x_0)=\{x_{t+1}=N_P(x_{t}): t \geq 0\}.
\end{equation}
However, a straightforward application of Newton method may fail. It is well-known that even for a cubic polynomial the orbit of an arbitrary seed may behave chaotically or enter a cycle.  Thus, on the one hand care must be taken to guarantee the orbit converges. On the other hand, one would hope the convergence is fast, taking only a few iterations.  In theory this can be assured if we pick a seed in the {\it quadratic region of convergence} of a simple root of $P(x)$, where roughly speaking the precision doubles with each iteration. However, finding such a point may not be an easy task, even for a cubic polynomial. Smale's {\it approximate zero theory}, also called {\it point estimation} provides a sufficient condition for testing if a given point is in the quadratic region of convergence of a general complex polynomial and the test can be performed efficiently.  Still, computing such a point itself is non-trivial.

In this article we first reduce all real cubic equations into four canonical forms, each defined in terms of a single parameter, $q$. Next, for each form we describe an implicit real interval $I$ of approximate zeros. Then we use the description of $I$ to generate an explicit seed $x_0 \in I$.   Our approach in this reduction builds on a classification of cubic equations due to Sharaf al-Din Tusi, a 12-th century Persian mathematician, recently analyzed in Kalantari and Zaare \cite{KZ}. We will make use of some results from \cite{KZ}, however here we offer new results on the nature of all real cubic equations that in particular turn the problem of numeric approximation of roots of a real cubic equation into a mechanical task, independent of Cardano's formula.

Based on the translations and analysis of the noted mathematics historian of the Golden Age of Islam, Rashed \cite{Rashed1, Rashed2}, Tusi gave a classification of certain cubic equations based on the number of positive zeros  and intervals containing them.  While Tusi correctly identified the intervals containing positive zeros, his technique for deriving them is disputed among math historians, see e.g. Hogendijk \cite{Hogen} and Berggren \cite{Berg}. Regardless, Tusi's work is considered to be deep, in certain respect surpassing the work of Omar Khayyam on such classification. Kalantari and Zaare \cite{KZ} work offers alternative analysis of Tusi's derivation. In particular, they show any real cubic equation, excluding $x^3-q=0$, under affine transformations that may include taking square-root of a coefficient, can be reduced to one of the following two  forms they  call  {\it Tusi form} and  {\it positive normal form}, respectively:
\begin{equation} \label{eqzzzz}
x^3-x^2+\frac{4}{27}\delta =0, \quad \delta \in \mathbb{R}; \quad {\rm ~~and~~}
x^3 + x-q=0, \quad q \in \mathbb{R}.
\end{equation}
In \cite{KZ} we made use of characterization of zeros of Tusi form to give a close connections between the discriminant $\Delta$ in Cardano's formula and the parameter $\delta$ in Tusi form.  Specifically, the reduced polynomial $P(x)$ in (\ref{cardano}) has three real roots if and only if  $\Delta >0$, if and only if
$\sqrt{- \Delta }$ in (\ref{cardano}) is an imaginary  numbers. However, a Tusi form has three real roots if and only if $\delta \in (0,1)$. Moreover,  each of the three intervals $(0,\frac{2}{3})$, $(\frac{2}{3},1)$ and $(-\frac{1}{3}, 0)$ contains a root.

In this article we show that if in addition to the above mentioned transformations
of the variable we also allow inversion, replacing $x$ with $1/x$, then in fact any nontrivial real cubic equations can be reduced into Tusi form in (\ref{eqzzzz}). Thus an efficient procedure for approximation of real roots of a trivial cubic equation as well as a Tusi form, is sufficient for numerical solution of all cubic equations.  With the intension of devising an efficient method for solving cubic equations, first we show
every real cubic equation is reducible to four canonical forms: {\it trivial form},  {\it positive normal form}, {\it Tusi form} with $\delta >1$, and {\it Tusi form} with  $\delta \in [0,1]$. Then for each form we identify an interval of approximate zeros, $I=[a,b]$, containing a real root. Next, from any approximation to $\sqrt[3]{q}$, only to within a relative error of five percent, we derive an explicit approximate zero $x_0 \in I$. As we justify, such approximation to $\sqrt[3]{q}$ is  readily computable from  {\it mantissa} and {\it exponent} of $q$.
Overall, this turns numerical approximation of roots of a real cubic equation into two steps: first reducing the equation into one of the four forms, next starting with an explicit approximate zero, applying a few Newton iterations. In this scheme there is no need to use Cardano's formula which in turn calls for approximation of cube-root of real or complex numbers. Moreover, it is an efficient scheme for computing high-precision approximation to a root.  The complexity bound implies even for $q$ up to  $10^{30}$, essentially at most six Newton iterations suffice.  The approximate solutions is easily convertible back into corresponding solutions for the original cubic equations.

\section{A Classification of Real Cubic Equations.} \label{sec3}

In this section we first define four canonical real cubic  forms. Then in the subsequent section we compute {\it approximate zeros} for each of the four forms and give complexity bound for approximating a real zero of each form.

\begin{definition}  We call the equation,

$x^3-q=0$, $q \geq  0$, {\it trivial form};

$x^3+ x-q=0$, $q \in \mathbb{R}$,  {\it positive normal form};

$x^3- x+q=0$,  $q \in \mathbb{R}$,  {\it negative normal form};

$x^3-x^2+ q=0$, $q \in \mathbb{R}$, {\it Tusi form}, divided into {\it  Type I, II, III}, if $q > \frac{4}{27}$, $q \in [0, \frac{4}{27}]$, and  $q <0$, respectively.
\end{definition}

Proposition \ref{prop1}, Theorem \ref{thm1}  and Theorem \ref{thm2} are from  \cite{KZ}. For the sake of completeness we copy their proofs.   Theorem  \ref{thm1} is a modern version of Tusi's work, where he ambiguously derived the maximum of quadratic function without using derivatives. For different interpretations of Tusi's computation, see \cite{Hogen}, \cite{KZ}, \cite{Rashed1, Rashed2}.

\begin{proposition} \label{prop1}
The reduced forms $x^3+px+q=0$, $p >0$ and $y^3+p'y+q'=0$, $p' <0$ are not reducible to each other under affine transformations. Moreover, a reduced  form $x^3+px+q=0$, with $p \cdot q \not =0$, under the change of variable $x \gets \sqrt{|p|}x$, is reducible to a positive or negative normal form.
\end{proposition}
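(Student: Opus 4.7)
The plan is to show that the sign of $p$ in a reduced cubic is invariant under any affine change of variable that preserves the reduced (depressed) form, and then to verify the second assertion by direct substitution.

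First I would observe that an affine transformation $x = ay+b$ (with $a\neq 0$) applied to $x^3+px+q$ yields
\[
a^3 y^3 + 3a^2 b\, y^2 + (3ab^2+pa)\,y + (b^3+pb+q).
\]
For the result to again be in reduced form (no $y^2$ term), we must take $b=0$, so up to rescaling the polynomial to be monic, the only admissible transformations are $x=ay$ followed by division of the polynomial by $a^3$. This produces the reduced cubic
\[
y^3 + \frac{p}{a^2}\,y + \frac{q}{a^3}.
\]
In particular the new linear coefficient is $p' = p/a^2$, and since $a^2>0$, the sign of the linear coefficient is preserved. This immediately yields the first assertion: a reduced form with $p>0$ cannot be transformed into one with $p'<0$ and vice versa.

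For the second assertion, I would specialize $a = \sqrt{|p|}$ (which is legitimate because $p\neq 0$). Substituting $x = \sqrt{|p|}\, y$ into $x^3+px+q$ and dividing by $|p|^{3/2}$ gives
\[
y^3 + \frac{p}{|p|}\,y + \frac{q}{|p|^{3/2}} = y^3 + \mathrm{sgn}(p)\,y + \frac{q}{|p|^{3/2}}.
\]
If $p>0$, setting $q' = -q/|p|^{3/2}$ yields $y^3+y-q'=0$, the positive normal form; if $p<0$, setting $q' = q/|p|^{3/2}$ yields $y^3-y+q'=0$, the negative normal form. Note that the condition $q\neq 0$ is only needed to ensure $q'\neq 0$, matching the nontriviality implicit in the normal forms.

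There is no serious obstacle here; the proof is essentially a direct computation. The only point that requires care is being explicit that ``affine transformation'' in this context means one that preserves the reduced form (equivalently, allows a subsequent rescaling of the polynomial to make it monic), since otherwise the claim about the sign of $p$ would need to be stated with the additional normalization built in.
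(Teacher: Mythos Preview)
Your proof is correct, but for the first assertion you take a different route than the paper. You argue computationally: any affine change $x=ay+b$ that keeps the cubic in reduced (depressed) form forces $b=0$, and then the linear coefficient becomes $p/a^2$, whose sign equals that of $p$. The paper instead gives a one-line invariant argument: when $p>0$ the function $x\mapsto x^3+px+q$ is strictly monotone (hence one-to-one), while for $p'<0$ the function $y\mapsto y^3+p'y+q'$ is not; since injectivity of a real function is preserved under any affine bijection, the two cannot be affinely equivalent. The paper's argument is slightly more robust in that it applies to arbitrary affine transformations without first needing to isolate the case $b=0$, and it sidesteps your closing caveat about what ``affine transformation'' should mean in this context. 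Your approach, on the other hand, is entirely self-contained and makes the invariant (the sign of the linear coefficient) explicit rather than appealing to a qualitative property of the graph. For the second assertion your substitution agrees with the paper's.
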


\begin{proof} The first cubic function is one-to-one while the second is not.  But such property would be preserved under affine transformations.
In the equation $x^3+px+q=0$, $p \cdot q \not =0$, replacing $x$ with $\sqrt{|p|}x$ results in $|p|^{3/2} \big (x^3 + {\rm sign} (p) x \big ) +q=0$.
\end{proof}

\begin{theorem} \label{thm1} {\rm (Tusi)}  The Tusi form  $P(x)=x^3-x^2+\delta \frac{4}{27}=0$ has  three distinct real roots if and only if $\delta \in (0,1)$ and the roots lie in the intervals $(0, \frac{2}{3})$, $( \frac{2}{3},1)$ and $(-\frac{1}{3},0)$. When $\delta =0$ $P(x)$ has a double root at $0$  and when $\delta=1$ a double root at $\frac{2}{3}$.  $P(x)$ has a single real root if and only if $\delta \not \in [0,1]$.
\end{theorem}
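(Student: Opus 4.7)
The plan is to analyze $P(x)=x^3-x^2+\tfrac{4\delta}{27}$ through its critical points and apply the Intermediate Value Theorem on the candidate intervals. Since the entire statement turns on whether the local extrema of $P$ straddle the $x$-axis, nothing here should require anything beyond elementary calculus.

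First I would compute $P'(x)=3x^2-2x=x(3x-2)$, giving critical points at $x=0$ and $x=2/3$. Evaluating, $P(0)=\tfrac{4\delta}{27}$ and $P(2/3)=\tfrac{8}{27}-\tfrac{4}{9}+\tfrac{4\delta}{27}=\tfrac{4(\delta-1)}{27}$. From $P''(x)=6x-2$ one sees $x=0$ is a local maximum and $x=2/3$ is a local minimum. A cubic with positive leading coefficient has three distinct real roots precisely when its local max is strictly positive and its local min strictly negative. This yields
\[
\tfrac{4\delta}{27}>0 \quad\text{and}\quad \tfrac{4(\delta-1)}{27}<0,
\]
i.e., $\delta\in(0,1)$, and otherwise gives a single real root when strict inequality fails in one direction (with the other extremum of the same sign as the infinities), accounting for the ``single real root iff $\delta\notin[0,1]$'' part of the claim.

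Next I would localize the three roots when $\delta\in(0,1)$ by evaluating $P$ at the four endpoints. Direct computation gives
\[
P(-\tfrac{1}{3})=\tfrac{4(\delta-1)}{27}<0,\quad P(0)=\tfrac{4\delta}{27}>0,\quad P(\tfrac{2}{3})=\tfrac{4(\delta-1)}{27}<0,\quad P(1)=\tfrac{4\delta}{27}>0.
\]
The three sign changes force, by the IVT, at least one root in each of $(-\tfrac{1}{3},0)$, $(0,\tfrac{2}{3})$, $(\tfrac{2}{3},1)$; since $P$ is a cubic with at most three real roots, each of these open intervals contains exactly one, as claimed.

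The boundary cases are immediate: for $\delta=0$, $P(x)=x^2(x-1)$ factors to give a double root at $0$; for $\delta=1$, $P(2/3)=0$ and $P'(2/3)=0$ by the critical-point computation, so $2/3$ is a double root. The main obstacle is not any one calculation but being careful with signs and with the ``iff'' directions, in particular noting that at $\delta=0$ or $\delta=1$ one of the extrema touches the axis so the cubic has a double root rather than three distinct ones, which is exactly how the boundary of the interval $(0,1)$ is captured. With all four cases covered, the statement of Theorem~\ref{thm1} follows.
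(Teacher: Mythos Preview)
Your proof is correct and follows essentially the same approach as the paper: both evaluate $P$ at the endpoints $-\tfrac{1}{3},0,\tfrac{2}{3},1$ and invoke the intermediate value theorem to locate the three roots when $\delta\in(0,1)$. The only cosmetic difference is in the single-root case: you argue directly via the sign of the local extrema of $P$, while the paper rephrases this by studying the maximum of the auxiliary function $\phi(x)=x^2-x^3$, which is the same analysis in different clothing.
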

\begin{proof} Suppose $\delta \in (0,1)$. $P(x)$ changes sign at $-\frac{1}{3}$, $0$, $\frac{2}{3}$ and $1$. Specifically, $P(-\frac{1}{3})=(-1+\delta)\frac{4}{27} <0$, $P(0) >0$, $P(\frac{2}{3})= (-1+\delta)\frac{4}{27} <0$, and $P(1)>0$. By the {\it intermediate value theorem} there is a root in each respective interval. The cases of $\delta=0$ and $\delta=1$ are trivial.  To show $\delta \not \in [0,1]$ implies a single real zero, it can be shown that the maximum value of $\phi(x)=x^2-x^3$ on $[0, \infty)$ is $\frac{4}{27}$, attained at $\frac{2}{3}$. Moreover, it approaches $\infty$ as $x$ approaches $-\infty$. Also, it is increasing on $[0,\frac{2}{3}]$ and decreasing on $[\frac{2}{3}, \infty)$. Using these it can be shown if $\delta >1$, $P(x)$ has a root in $(-\infty, - \frac{1}{3})$ and  if $\delta <0$ it has a root in $(1, \infty)$.
\end{proof}

\begin{theorem}  \label{thm2} The equation $x^3+px+q=0$, $p <0$ is reducible to Tusi form with $\delta = \frac{1}{2} +  \frac{3\sqrt{3}q}{4 (-p)^{3/2}}$.  There are three distinct real roots if and only if $\Delta >0$.
\end{theorem}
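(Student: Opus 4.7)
The plan is to reduce $x^3+px+q=0$ with $p<0$ to Tusi form in two stages: first rescale the variable to normalize the linear coefficient, then apply an affine shift/scale that kills the linear term while introducing the $-y^2$ term characteristic of Tusi form. The equivalence of root-count conditions will then follow by comparing the criterion $\delta\in(0,1)$ (Theorem \ref{thm1}) with the sign of the discriminant.

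First I would invoke Proposition \ref{prop1}: the change of variable $x\gets \sqrt{-p}\,x$ transforms $x^3+px+q=0$ into the negative normal form $x^3 - x + q'=0$ with $q' = q/(-p)^{3/2}$. Next, I would substitute $x = \sqrt{3}\bigl(y-\tfrac{1}{3}\bigr)$, i.e.\ $y = x/\sqrt{3} + 1/3$. Expanding $\bigl(\sqrt{3}(y-\tfrac{1}{3})\bigr)^3 - \sqrt{3}(y-\tfrac{1}{3}) + q' = 0$ and dividing through by $3\sqrt{3}$ gives
\begin{equation*}
y^3 - y^2 + \left(\tfrac{2}{27} + \tfrac{q'}{3\sqrt{3}}\right) = 0,
\end{equation*}
in which the linear coefficient vanishes thanks to the specific choice of shift $1/3$ and scale $\sqrt{3}$ (these are forced by matching $-3b=-1$ and $3b^2 = a^2$). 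Setting the constant term equal to $\tfrac{4}{27}\delta$ and solving yields $\delta = \tfrac{1}{2} + \tfrac{9q'}{4\sqrt{3}} = \tfrac{1}{2} + \tfrac{3\sqrt{3}\,q'}{4}$, and substituting $q' = q/(-p)^{3/2}$ produces the claimed formula
\begin{equation*}
\delta = \tfrac{1}{2} + \tfrac{3\sqrt{3}\,q}{4(-p)^{3/2}}.
\end{equation*}

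For the second assertion, Theorem \ref{thm1} says the reduced Tusi equation has three distinct real roots iff $\delta\in(0,1)$, which by the formula above is equivalent to
\begin{equation*}
\left|\tfrac{3\sqrt{3}\,q}{4(-p)^{3/2}}\right| < \tfrac{1}{2},
\end{equation*}
i.e.\ $27 q^2 < 4(-p)^3 = -4p^3$. Rearranged, this is precisely $\Delta = -\bigl(\tfrac{q^2}{4}+\tfrac{p^3}{27}\bigr) > 0$, and since the affine reductions preserve the number of distinct real roots, this finishes the proof.

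There is no real obstacle beyond bookkeeping: the only delicate point is choosing the shift and scale $(b,a) = (1/3, 1/\sqrt{3})$ correctly so that both the quadratic coefficient becomes $-1$ and the linear coefficient vanishes simultaneously — this is what forces $p<0$ (otherwise $a^2 = 3b^2$ could not match a positive linear coefficient after rescaling), consistent with Proposition \ref{prop1}.
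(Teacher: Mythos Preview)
Your proof is correct and follows essentially the same route as the paper: an affine change of variables bringing $x^3+px+q$ to Tusi form, followed by Theorem~\ref{thm1} and the algebraic identification $\delta\in(0,1)\Leftrightarrow\Delta>0$. The only cosmetic difference is the order of operations---the paper shifts first by $\sqrt{-p/3}$ and then scales by $\sqrt{-3p}$, whereas you scale first (passing through the negative normal form via Proposition~\ref{prop1}) and then apply the combined shift/scale $x=\sqrt{3}(y-\tfrac13)$; the composite substitution $x=\sqrt{-3p}\,y-\sqrt{-p/3}$ is the same either way.
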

\begin{proof}  Replacing $x$ twice, first by $x- ({-p}/{3})^{1/2}$, subsequently replacing it by $(-3p)^{1/2}x$ results in a Tusi form with $\delta$ as claimed.
From Theorem \ref{thm1} there are three distinct real roots if and only if $\delta \in (0,1)$. Equivalently, $|{3\sqrt{3}q}/{4 (-p)^{3/2}}| < \frac{1}{2}$.
Squaring both sides implies $\Delta >0$. The converse also holds.
\end{proof}

\begin{corollary}  \label{cor1} The negative normal form  $x^3-x+q=0$ corresponds to  Tusi form

(i) with $\delta \in [0,1]$, if and only if $q \in [- \frac{2}{3 \sqrt{3}}, \frac{2}{3 \sqrt{3}}]$;

(ii) with $\delta >1$ if and only if $q > \frac{2}{3 \sqrt{3}}$;

(iii) with $\delta <0$ if and only if $q  < -\frac{2}{3 \sqrt{3}}$.
\end{corollary}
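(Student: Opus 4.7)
The plan is to specialize Theorem \ref{thm2} to the negative normal form and then read off the three equivalences by elementary inequality manipulation. In the equation $x^3-x+q=0$ we have $p=-1$, so $(-p)^{3/2}=1$, and the formula from Theorem \ref{thm2} collapses to
\[
\delta = \frac{1}{2} + \frac{3\sqrt{3}\,q}{4}.
\]
This is an affine, strictly increasing function of $q$, so each of the three ranges for $\delta$ pulls back to a single range for $q$, and the only work left is to locate the two endpoints.

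First I would verify the boundary values. Setting $\delta=0$ in the above formula gives $q=-\tfrac{2}{3\sqrt{3}}$, and setting $\delta=1$ gives $q=\tfrac{2}{3\sqrt{3}}$. Since $\delta$ is increasing in $q$, this immediately yields: $\delta\in[0,1]\iff q\in[-\tfrac{2}{3\sqrt{3}},\tfrac{2}{3\sqrt{3}}]$, proving (i); $\delta>1\iff q>\tfrac{2}{3\sqrt{3}}$, proving (ii); and $\delta<0\iff q<-\tfrac{2}{3\sqrt{3}}$, proving (iii). There is essentially no obstacle here—the entire content of the corollary is a bookkeeping consequence of Theorem \ref{thm2} once one notices that the hypothesis $p<0$ is satisfied by the negative normal form and that $(-p)^{3/2}=1$ when $p=-1$.
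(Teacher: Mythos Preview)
Your proof is correct and follows essentially the same route as the paper: specialize Theorem~\ref{thm2} with $p=-1$ to obtain $\delta=\tfrac{1}{2}+\tfrac{3\sqrt{3}}{4}q$, then invert this affine relation to translate the three ranges of $\delta$ into the corresponding ranges of $q$. Your presentation is in fact slightly cleaner, since you explicitly note the monotonicity and compute both endpoints, whereas the paper's version contains a minor typo (it writes $\tfrac{3\sqrt{3}}{4}q\in[0,\tfrac{1}{2}]$ where $[-\tfrac{1}{2},\tfrac{1}{2}]$ is meant).
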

\begin{proof} From Theorem \ref{thm2} the negative normal form  can be written in Tusi form with $\delta = \frac{1}{2} +  \frac{3\sqrt{3}}{4}q$.  Then $\delta \in [0,1]$ if and only if $\frac{3\sqrt{3}}{4}q \in [0, \frac{1}{2}]$, implying (i). Similarly (ii) and (iii) follow.
\end{proof}

\begin{proposition} \label{prop2}
The Type III Tusi form, $x^3-x^2+q=0$, $q <0$,
is reducible to a positive  normal form.
\end{proposition}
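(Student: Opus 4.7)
The plan is to exploit the inversion trick mentioned in the introduction: since $q<0$ guarantees $0$ is not a root of $x^3-x^2+q$, the substitution $x=1/u$ is well defined on the set of roots. Under inversion, multiplying the resulting equation $1 - u + qu^3 = 0$ through by the appropriate sign and writing $q=-|q|$ yields
\[
u^3 + \frac{1}{|q|}u - \frac{1}{|q|} = 0,
\]
so the coefficient of $u$ becomes positive, which is exactly what is needed to aim for a \emph{positive} normal form rather than a negative one.

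Next I would apply the scaling permitted in Proposition~\ref{prop1}, namely $u = v/\sqrt{|q|}$ (i.e., absorbing the coefficient $1/|q|$ of the linear term). Substituting and multiplying through by $|q|^{3/2}$ collapses the equation into
\[
v^3 + v - \sqrt{|q|} = 0,
\]
which is the positive normal form $v^3+v-q'=0$ with parameter $q'=\sqrt{|q|}>0$. Composing the two substitutions, the original root $x$ is recovered from $v$ via $x = \sqrt{|q|}/v$, so the reduction is genuinely reversible on nonzero roots.

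I expect the only subtlety is the justification that inversion is a legitimate transformation here, which requires checking $x=0$ is never a root of $x^3-x^2+q$ when $q<0$; this is immediate since $P(0)=q\neq 0$. The rest is mechanical: one substitution and one scaling, both of the kinds already permitted in the paper's framework. No obstacle of substance arises, and the argument parallels the reductions used in Proposition~\ref{prop1} and Theorem~\ref{thm2}, only with inversion adjoined to the toolkit.
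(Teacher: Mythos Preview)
Your argument is correct and is essentially the paper's own proof: both reduce $x^3-x^2+q=0$ with $q<0$ to the positive normal form $v^3+v-\sqrt{|q|}=0$ via the composite substitution $x=\sqrt{|q|}/v$. The only cosmetic difference is that the paper packages the inversion and scaling into the single step $x\mapsto -\sqrt{-q}/x$ (followed by a sign flip), whereas you separate them into $x=1/u$ and $u=v/\sqrt{|q|}$; the end result and the parameter $q'=\sqrt{|q|}$ coincide.
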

\begin{proof} Letting $\widehat q= \sqrt{-q}$, Tusi form becomes $x^3-x^2-\widehat q^2=0$. Replacing $x$ with $-\widehat q/x$, next multiplying by $x^3/\widehat q^2$ we get $x^3+x+\widehat q=0$.
Replacing $x$ with $-x$ gives a positive normal form.
\end{proof}

From Propositions \ref{prop1} and \ref{prop2} we conclude:

\begin{theorem} \label{cor2} Excluding the trivial form, any real cubic equation via transformation of the types $\sqrt{|\alpha|}x$, $\pm (x- \alpha)^{\pm 1}$ is reducible a Tusi form,
$x^3-x^2+q$, $q \in \mathbb{R}$. \qed
\end{theorem}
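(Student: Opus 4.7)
The plan is to chain together the previously established reductions.  Given an arbitrary real cubic equation $ax^3+bx^2+cx+d=0$, first divide by $a$ and apply the classical depression $x \leftarrow x - b/(3a)$ to reach the reduced form $x^3+px+q=0$.  If $p=0$ this is the trivial form $x^3+q=0$, which the statement excludes; if $p\neq 0$ but $q=0$ the equation factors as $x(x^2+p)=0$ and is numerically trivial.  So the interesting case is $p \cdot q \neq 0$, where Proposition \ref{prop1} applies: the scaling $x \leftarrow \sqrt{|p|}\,x$ converts the equation to a positive or a negative normal form, according to the sign of $p$.

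If the output is the negative normal form (corresponding to $p<0$), Theorem \ref{thm2} already delivers the required Tusi form.  The remaining and main case is the positive normal form $x^3+x-q'=0$ with $q'\neq 0$, which is precisely the case not covered by Theorem \ref{thm2}.  To handle it I would invert the transformation used in the proof of Proposition \ref{prop2}.  After a preliminary sign flip $x \leftarrow -x$ if needed, I may assume $q'>0$; then substituting $x \leftarrow q'/x$ and clearing denominators by multiplying through by $x^3/q'$ should turn $x^3+x-q'=0$ into $x^3-x^2-(q')^2=0$, a Tusi form of Type III.  Every substitution employed along the way -- depression, scaling $\sqrt{|p|}\,x$, the sign flip $x \leftarrow -x$, and the inversion $x \leftarrow q'/x$ -- belongs to the allowed families $\sqrt{|\alpha|}\,x$ and $\pm(x-\alpha)^{\pm 1}$, so composing them gives a legitimate reduction.

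I do not anticipate a genuine obstacle.  The proof is essentially a case analysis on the sign of $p$, with the only substantive calculation being the verification that the inverse of the Proposition \ref{prop2} substitution carries the positive normal form to a Type III Tusi form.  The main care needed is the preliminary sign adjustment to guarantee $q'>0$ before performing the inversion, and the bookkeeping to confirm that each elementary operation in the chain is indeed of one of the two permitted types.
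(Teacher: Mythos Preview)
Your proposal is correct and follows essentially the same route as the paper, which simply invokes Propositions~\ref{prop1} and~\ref{prop2} (with Theorem~\ref{thm2} implicitly handling the negative normal form). You have merely filled in the details the paper leaves implicit---most notably, running the substitution of Proposition~\ref{prop2} in reverse to carry the positive normal form to a Type~III Tusi form.
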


\begin{remark} As shown in \cite{KZ} under the transformations of the types $\sqrt{|p|}x$ and $\pm (x- \alpha)$ any real cubic equation is reducible, either to a Tusi form or a positive normal form. However, as shown above under the additional transformation of inversion a positive normal form is reducible to a negative normal form and subsequently to a Tusi form.  While this shows the significance of Tusi form, from the point of view of numerical approximation of a real root via Newton method the following theorem summarizes the classification  into four forms and the number of real zeros.
\end{remark}

\begin{theorem} \label{thm3} Any real cubic equation via change of variable of the types $\sqrt{|\alpha|}x$, $\pm (x- \alpha)^{\pm 1}$ is reducible to one of the following four forms:

(1) trivial form (a single real root or a triple root);

(2) positive normal form (a single real root);

(3) Tusi form with $\delta >1$ (a single real root);

(4) Tusi form with $\delta \in [0,1]$ (three real roots, distinct if $\delta \in (0,1)$). \qed

\end{theorem}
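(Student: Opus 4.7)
The plan is to perform an exhaustive case analysis starting from the depressed form and then invoke the preceding structural results. First, given an arbitrary real cubic $a_3 x^3 + a_2 x^2 + a_1 x + a_0 = 0$ with $a_3 \neq 0$, I would divide through by $a_3$ (which does not involve a change of variable) and apply the translation $x \mapsto x - a_2/(3 a_3)$, an instance of $\pm(x-\alpha)^{\pm 1}$, to obtain the depressed form $x^3 + p x + q = 0$ for some $p, q \in \mathbb{R}$.

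Next I would split on $p$. If $p = 0$, the equation is $x^3 + q = 0$, and a sign flip $x \mapsto -x$ (again of type $\pm(x-\alpha)^{\pm 1}$), applied when $q > 0$, lands the equation in the trivial form $x^3 - |q| = 0$ with $|q| \geq 0$; this is case (1), with a single real root when $q \neq 0$ and a triple root at $0$ when $q = 0$. If $p \neq 0$, the scaling $x \mapsto \sqrt{|p|}\, x$ from Proposition~\ref{prop1} reduces the depressed equation to positive normal form when $p > 0$, i.e.\ case (2), whose single real root follows from strict monotonicity of $x \mapsto x^3 + x$; or to negative normal form when $p < 0$. Note the degenerate subcase $q = 0$ works the same under this scaling.

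For the negative normal form, I would apply Theorem~\ref{thm2} — a translation followed by a scaling, both of the permitted types — to convert it into a Tusi form $x^3 - x^2 + \widehat q = 0$ with $\delta = \tfrac{1}{2} + \tfrac{3\sqrt{3}}{4} q$. Corollary~\ref{cor1} then classifies the Tusi subtype directly from the size of $q$: either $\delta \in [0,1]$ (case 4), $\delta > 1$ (case 3), or $\delta < 0$, the Type III Tusi form. In this last subcase Proposition~\ref{prop2}, whose proof uses the inversion-and-scaling $x \mapsto -\widehat q/x$ realizable as a composition of transformations of the allowed types $\pm(x-\alpha)^{\pm 1}$ and $\sqrt{|\alpha|}\,x$, sends the Type III form back into a positive normal form, collapsing it into case (2). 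The root-count assertions for cases (3) and (4) are supplied by Theorem~\ref{thm1}.

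The argument is essentially bookkeeping built on the prior lemmas, and I do not expect any substantive obstacle. The points that require care are verifying that every substitution used belongs to the stated types $\sqrt{|\alpha|}\,x$ or $\pm(x-\alpha)^{\pm 1}$, not overlooking the degenerate cases $p = 0$ or $q = 0$ (in particular $p \neq 0,\ q = 0$ still reduces via the same scaling, landing either in case (2) or in a negative normal form with $\delta = 1/2$, hence case (4)), and routing the Type III Tusi form through Proposition~\ref{prop2} so that the final classification contains exactly four forms rather than five.
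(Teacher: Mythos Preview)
Your proposal is correct and matches the paper's approach: the theorem is stated with an immediate \qed\ and is treated as a summary of Propositions~\ref{prop1}--\ref{prop2}, Theorems~\ref{thm1}--\ref{thm2}, and Corollary~\ref{cor1}, which is precisely the chain of reductions you spell out. Your write-up simply makes explicit the case analysis that the paper leaves implicit.
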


\section{Smale's Point Estimation for Cubic Polynomials.}

Smale's approximate zero  theory for a complex polynomial provides a sufficient condition for testing the membership of a point in the quadratic region of  convergence of Newton method. The orbit of such a point rapidly converges to a root of the polynomial.  In this  section we state Smale's condition for a real cubic polynomial $P(x)$. Then in the next four sections we will apply this to the four forms stated in Theorem \ref{thm3}, first by identifying an implicit interval of approximate zeros for each of the forms and subsequently by identifying an explicit  point $x_0$ in each interval.  Additionally, the approximation is easily convertible back into an approximation to a root of the original reduced cubic equation. An example of the latter is given at the end of the section. Note that once we have approximation to one root, by deflation other real or complex roots can be approximated.

\begin{definition}  Let $P(x)$ be a real cubic polynomial. $x_0 \in \mathbb{R}$ is an {\it approximate zero} of  $P(x)$, if  for all $t \geq 1$, $x_{t+1}=N_P(x_t)$  satisfies
$ |x_{t+1}- x_t | \leq 2 \cdot 2^{2^{-t}}|x_1-x_0|$. We let $Z(P)$ denote the set of all approximate zeros.
\end{definition}

\begin{theorem} \label{thm4} {\rm (Smale \cite{Smale86})}  Given a real cubic polynomial $P(x)$, a sufficient condition for $x_0$ to lie in $Z(P)$ is:
\begin{equation} \label{smalecond}
|P(x_0)| \leq \frac{1}{6} \frac{|P'(x_0)|}
{\gamma(x_0)}, \quad {\rm ~~where~~} \gamma(x_0)=\max \left \{  \left |\frac{P''(x_0)}{2 P'(x_0)} \right | , \frac{1}{|P'(x_0)|^{1/2}} \right \}.
\end{equation}
Moreover, for some root $\theta$ of $P(x)$ and for all $t \geq 1$ we have,
\begin{equation} \label{smalebound}
|x_{t} - \theta| \leq 8 \cdot 2^{2^{-t}} |x_0-\theta|.
\end{equation}
We  say $x_0$ is an approximate zero for $\theta$ and write $x_0 \in Z(\theta)$. \qed
\end{theorem}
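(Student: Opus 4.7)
Since the result is attributed to Smale, the natural plan is to derive it as a direct specialization of Smale's general $\alpha$-theory for polynomials to the degree-$3$ case. In Smale's framework one defines, for a polynomial $P$,
\[
\beta(x_0)=\left|\frac{P(x_0)}{P'(x_0)}\right|,\qquad
\gamma_S(x_0)=\sup_{k\ge 2}\left|\frac{P^{(k)}(x_0)}{k!\,P'(x_0)}\right|^{1/(k-1)},\qquad
\alpha(x_0)=\beta(x_0)\gamma_S(x_0),
\]
and proves that there is a universal threshold $\alpha_0$ (taken here as $1/6$) such that $\alpha(x_0)\le\alpha_0$ forces $x_0$ into the quadratic basin of a simple root, with both the approximate-zero property and a super-geometric error bound of the form (\ref{smalebound}).

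The first step is to simplify $\gamma_S$ for cubics. Because $P$ has degree $3$, $P^{(k)}\equiv 0$ for $k\ge 4$, so the supremum reduces to a maximum over $k=2,3$. The $k=2$ term is exactly $|P''(x_0)/(2P'(x_0))|$. For monic $P$, $P^{(3)}(x_0)\equiv 6$, so the $k=3$ term evaluates to $|6/(6\,P'(x_0))|^{1/2}=1/|P'(x_0)|^{1/2}$. This identifies $\gamma_S(x_0)$ with the $\gamma(x_0)$ of (\ref{smalecond}), and the hypothesis $|P(x_0)|\le |P'(x_0)|/(6\gamma(x_0))$ becomes exactly $\alpha(x_0)\le 1/6$.

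With the hypothesis recast in standard Smale form, the second step is to import the conclusion. The usual proof compares the Newton orbit of $P$ with that of a scalar majorant (a function like $\psi(u)=1-4u+2u^2$ whose Newton iteration can be analyzed by hand), deduces that once $\alpha(x_0)\le\alpha_0$ the quantity $\alpha(x_t)$ at least halves each step, and telescopes the resulting contractions to produce $|x_{t+1}-x_t|\le 2\cdot 2^{2^{-t}}|x_1-x_0|$; this is the definition of an approximate zero. The companion bound (\ref{smalebound}) follows by pairing the standard local contraction $|x_{t+1}-\theta|\le c\,\gamma(\theta)|x_t-\theta|^2$ near a simple root $\theta$ with the telescoped estimates and absorbing the constants into the factor $8\cdot 2^{2^{-t}}$.

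The main obstacle I anticipate is not conceptual but numerical: pinning down the three constants $1/6$, $2$ and $8$ so that they emerge cleanly from the majorant analysis rather than with additional slack. Smale's general argument leaves some headroom between the threshold $\alpha_0$ and the contraction factor, and the cubic case (where $\gamma_S$ is a maximum over only two terms) must be exploited just enough to make these constants snap into place. Nothing in the argument uses properties of real polynomials beyond monicity, so the restriction $x_0\in\mathbb R$ is harmless.
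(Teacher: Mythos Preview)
The paper does not supply its own proof of this theorem: note the \qed\ immediately following the statement, and the subsequent remark which simply points to Renegar for a formal derivation of (\ref{smalebound}) and to Wang--Zhao for the sharper constant. It is quoted as a known result of Smale, specialized to cubics.

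Your plan is exactly the right way to see the specialization. The key computation---that for a monic cubic the supremum defining Smale's $\gamma_S$ collapses to the two-term maximum appearing in (\ref{smalecond}), because $P^{(k)}\equiv 0$ for $k\ge 4$ and $P'''\equiv 6$---is correct and is the only new content beyond the general theory. Once that identification is made, the hypothesis is literally $\alpha(x_0)\le 1/6$, and both conclusions are imported from the standard $\alpha$-theorem. You need not worry about making the constants ``snap into place'': the paper's own remark explains that $1/6$ is chosen for aesthetics and lies safely below the Wang--Zhao threshold $3-2\sqrt{2}$, so any version of Smale's theorem with threshold at least $1/6$ suffices.

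One small caveat: your derivation of the $k=3$ term uses $P'''=6$, i.e.\ assumes $P$ is monic. The theorem as stated says only ``real cubic polynomial,'' though every form the paper actually applies it to is monic, so in context this is harmless.
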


\begin{remark} Since the Newton orbit of an approximate zero is a Cauchy sequence it must have an accumulation point $\theta$, necessarily a fixed point of $N_P(x)$. But then $\theta$ is a root of $P(x)$. Basically this implies an approximate zero satisfies (\ref{smalebound}). For a formal derivation of (\ref{smalebound})
see Renegar \cite{Renegar}, Proposition 4.1. The constant $1/6$ in (\ref{smalecond}) can actually be replaced with an absolute constant $\alpha_0$, where in Smale's original article for a general polynomial, $\alpha_0 \geq 1/8$. Wang and Zhao \cite{Deren}
show $\alpha_0$ can be taken to be $3-2 \sqrt{2} \geq .17 \geq 1/6$. We have chosen to use $1/6$ only for aesthetic reasons.
\end{remark}

As will be seen, the advantage of reducing a general cubic equation into one of the four canonical forms is that it enables us to compute an explicit approximate zero $x_0$, where its Newton orbit will rapidly converge to a root.  In the next four sections we will consider each of the four forms separately. However, the collective results to be proven can be summarized as the following theorem.

\begin{theorem}  \label{mainthm}  Given $q >0$, let $\rho_q$ be an approximation to $\sqrt[3]{q}$ to within a relative error of five percent, i.e.,
$|\rho_q - \sqrt[3]{q}| \leq .05\sqrt[3]{q}$. An approximate zero $x_0$ and the  location of the corresponding root $\theta_q$ for each polynomial $P(x)$ in the four category of cubic equations: trivial form, positive normal form, type I Tusi form, and type II Tusi form, broken into the two case (4) and (5).

\noindent (1) If $P(x)=x^3-q$, $q \geq 0$, then $x_0= \rho_q$ and $\theta_q=\sqrt[3]{q}$.

\noindent (2) If $P(x)=x^3+x-q$,  $q \geq 3 \sqrt{2}$, then $x_0= .95 \rho_q$ and $\theta_q \in [\sqrt[3]{{2q}/{3}}, \sqrt[3]{q}]$.

\noindent (3) If $P(x)=x^3-x^2+q$, $ q > 8$, then $x_0= - .95 \rho_q$ and $\theta_q \in [- \sqrt[3]{q}, - \sqrt[3]{{2q}/{3}}]$.

\noindent (4) If $P(x)=x^3-x^2+q$,  $q \in [0, \frac{1}{12}]$, then
$x_0 =1$ and $\theta_q \in [\frac{2}{3}, 1]$.

\noindent (5) If $P(x)=x^3-x^2+q$, $q \in [\frac{1}{12}, \frac{4}{27}]$, then  $x_0=-\frac{1}{3}$ and $\theta_q \in [-\frac{1}{3},0]$.

Moreover, for all $t \geq 0$, the Newton iterate $x_t=N_P(x_{t-1})$, satisfies
\begin{equation} \label{errorbd}
|x_t -  \theta_q| \leq
\begin{cases}
\sqrt[3]{q} \cdot 2^{2^{-t}} , & \text{cases (1)-(3)} \\
3 \cdot  2^{2^{-t}}, & \text{cases (4),(5).}
\end{cases}
\end{equation}
\end{theorem}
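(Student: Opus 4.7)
The plan is to treat each of the five cases in turn, following a common three-step template: (i) verify Smale's sufficient condition (\ref{smalecond}) at the specified seed $x_0$, so that Theorem~\ref{thm4} certifies $x_0\in Z(P)$; (ii) locate the attractor root $\theta_q$ in the claimed interval via the intermediate value theorem on $P$ (for cases with multiple real roots, identify it as the unique root of $P$ inside the small ball around $x_0$ preserved by the Newton orbit); (iii) bound $|x_0-\theta_q|$ and insert it into (\ref{smalebound}) to obtain (\ref{errorbd}).

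The trivial form is the cleanest: $P'(x)=3x^2$ and $P''(x)=6x$ give $\gamma(x_0)=1/|x_0|$, so Smale's condition collapses to $|\rho_q^3-q|\le \rho_q^3/2$. Writing $\rho_q=(1+\epsilon)\sqrt[3]{q}$ with $|\epsilon|\le 0.05$ reduces this to the scalar inequality $|(1+\epsilon)^3-1|\le (1+\epsilon)^3/2$, easily checked on $[-0.05,0.05]$; since $|x_0-\theta_q|=|\epsilon|\sqrt[3]{q}\le 0.05\sqrt[3]{q}$, (\ref{smalebound}) yields (\ref{errorbd}). Cases (4) and (5) are equally direct because the seed is a fixed constant: at $x_0=1$ one finds $\gamma(1)=2$ and Smale reads $q\le 1/12$, matching (4) exactly; at $x_0=-1/3$ one finds $\gamma(-1/3)=2$ and Smale reads $|q-4/27|\le 1/12$, which comfortably contains $[1/12,4/27]$. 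The root locations then follow from Theorem~\ref{thm1}, and $|x_0-\theta_q|\le 1/3$ in both cases produces the constant bound in (\ref{errorbd}).

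Cases (2) and (3) are the most substantive. Since the seed $x_0=\pm 0.95\rho_q$ scales with $\sqrt[3]{q}$, I would rescale by appropriate powers of $\sqrt[3]{q}$ so that Smale's condition becomes a two-parameter inequality in $\epsilon\in[-0.05,0.05]$ and $s:=q^{-2/3}$, where $s$ ranges over a bounded interval shrinking to $0$ as $q\to\infty$. The $s\to 0$ limit reproduces a trivial-form-type inequality with $y:=0.95(1+\epsilon)\in[0.9025,0.9975]$ in place of $(1+\epsilon)$, namely $|y^3-1|\le y^3/2$, which holds with ample slack. One then shows that the finite-$s$ corrections are monotone in $s$ and that the Smale inequality persists at the lower-$q$ threshold ($q=3\sqrt{2}$ for (2), $q=8$ for (3)). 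The root intervals follow from sign evaluations of $P$ at $\pm\sqrt[3]{q}$ and $\pm\sqrt[3]{2q/3}$; a direct estimate shows $|x_0-\theta_q|\le \sqrt[3]{q}/8$, so (\ref{smalebound}) delivers the $\sqrt[3]{q}\cdot 2^{2^{-t}}$ form of (\ref{errorbd}).

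The main obstacle is the two-parameter uniformity in cases (2) and (3): Smale's slack depends jointly on the 5\% tolerance in $\rho_q$ and on $q$, and is tightest at the lower $q$-threshold rather than in the benign $q\to\infty$ regime. A clean route is to show the Smale slack is monotone in $s=q^{-2/3}$ and verify it directly at the threshold value over the full range of $\epsilon$; the trivial-form calculation in paragraph two then becomes the limiting ingredient that does the heavy lifting.
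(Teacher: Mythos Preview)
Your treatment of cases (1), (4), (5) is essentially identical to the paper's (Theorems~\ref{thm6} and~\ref{thm7}): evaluate $\gamma$ at the fixed seed, read off Smale's inequality as a constraint on $q$, and in the Tusi cases argue that the Newton map preserves the half-line $(\tfrac{2}{3},\infty)$ or $(-\infty,0)$ so the orbit lands on the intended root.

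For cases (2) and (3) you diverge from the paper. Rather than running a two-parameter analysis in $(\epsilon,s)$ at the single point $x_0$, the paper first replaces Smale's condition by the cruder but handier test $|P(x)|\le \tfrac{1}{2}|x|^3$ (Lemmas~\ref{lem3XX} and~\ref{lem1}, obtained by bounding $|P'(x)|/(6\gamma(x))$ from below), and then shows that the \emph{entire interval} $I=[\sqrt[3]{2q/3},\sqrt[3]{q}]$ (resp.\ its negative) satisfies this test once $q$ exceeds the threshold; since $x_0$ is shown to lie in $I$ via $a\le 0.95^2$ and $0.95\cdot 1.05\le 1$, membership in $Z(P)$ follows. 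This buys a one-variable monotonicity argument on $I$ instead of a two-variable one, and the $|x_0-\theta_q|$ bound then comes for free from the length of $I$.

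Your route is viable, but be careful with the claim that the Smale slack is monotone in $s=q^{-2/3}$: it is not. For $y=0.95(1+\epsilon)<1$ the quantity $y^3+ys-1$ changes sign as $s$ grows, and once it is positive the slack is \emph{decreasing} in $s$ (since $\partial_s(\text{RHS})=(3y^2+s)/(9y)<y=\partial_s(\text{LHS})$ on the relevant range). The slack is therefore unimodal, so checking only the threshold value of $s$ is not enough by itself; you need both the $s\to 0$ check (your trivial-form limit) and the threshold check, together with unimodality. Alternatively, adopting the paper's simplification $|P(x)|\le |x|^3/2$ collapses the condition to the pair of linear-in-$s$ inequalities $1-\tfrac{3}{2}y^3\le ys\le 1-\tfrac{1}{2}y^3$, each of which is immediate on $y\in[0.9025,0.9975]$ and $s\in[0,(3\sqrt{2})^{-2/3}]$, and the monotonicity issue disappears.
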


\begin{remark} \label{remrm}
In the case of (2) for $q \in (0, 3\sqrt{2})$ and the case of (3) for $q  \in (4/27,8]$ we can use (\ref{smalecond}) to find by inspection a small set of approximate zeros. However, we avoid computing such a set for this range of $q$.
\end{remark}

\begin{proposition} \label{prop9}
Let  $q=m \cdot 10^n$, where $m$ is the mantissa and $n$ the exponent of $q$, i.e.
$m \in [1,10)$ is a decimal number and $n \in \mathbb{Z}$. Let $c_n=\sqrt[3]{10^n}$.  Set
$$M=\{(1+.1j)^3: j=0, \dots, 16\}$$
and let $(1+.1j_*)^3$ be the member of $M$ closest to $m$. Then $\rho_m =(1+.1j_*)$
satisfies $|\rho_m -\sqrt[3]{m}| \leq 0.05 \sqrt[3]{m}$ and $\rho_q= c_n \rho_m$ satisfies
$|\rho_q - \sqrt[3]{q}| \leq .05\sqrt[3]{q}$. In particular, in cases of (1)-(3)
in Theorem \ref{mainthm}, the approximate zero $x_0$ can be written in terms of $\rho_m$ and $c_n$.  Moreover, in the case of trivial form $x_t=c_n y_t$, where $y_t = N_Q(y_{t-1})$, $Q(y)=y^3 - m$ and $y_0=\rho_m$ so that to approximate $\sqrt[3]{q}$ it suffices to approximate
$\sqrt[3]{m}$ via Newton method starting with $y_0$, then scale $y_t$  by $c_n$.
\end{proposition}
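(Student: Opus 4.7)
The plan has three parts: establish the five-percent bound for $m\in[1,10)$, extend to $q = m\cdot 10^n$ by multiplicativity, and verify the Newton-scaling identity for the trivial form.

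First I would prove the one-decade case: for $m\in[1,10)$, the index $j_*$ making $(1+0.1j_*)^3$ closest to $m$ in $M$ yields $\rho_m := 1+0.1j_*$ with $|\rho_m - \sqrt[3]{m}|\le 0.05\sqrt[3]{m}$. The key is to rephrase both sides as intervals in $m$. Rewriting the target inequality gives the \emph{tolerance interval}
$$T_j := \bigl[((1+0.1j)/1.05)^3,\ ((1+0.1j)/0.95)^3\bigr],$$
while the closest-cube rule assigns $j_*=j$ precisely when $m$ lies in the \emph{selection interval}
$$S_j := \bigl[\tfrac{(1+0.1(j-1))^3+(1+0.1j)^3}{2},\ \tfrac{(1+0.1j)^3+(1+0.1(j+1))^3}{2}\bigr]$$
(with $\inf S_0 := 1$ by the restriction $m\ge 1$). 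Thus the claim amounts to $S_j\subseteq T_j$ for every $j$ in the range needed to cover $[1,10)$. Since $(2.2)^3>10$, only $j\in\{0,1,\ldots,12\}$ matters; the extra indices up to $16$ in $M$ are harmless padding. Each of the two endpoint containments rearranges, after taking cube roots, into an elementary linear inequality in $j$ that holds for all $j\ge 0$; the two truly boundary cases ($j=0$ at the lower end of the range and $j=12$ at $m\to 10^-$) are verified by direct substitution.

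Second, for $q=m\cdot 10^n$ with $m\in[1,10)$, multiplicativity of the cube root yields $\sqrt[3]{q}=c_n\sqrt[3]{m}$ and $\rho_q-\sqrt[3]{q}=c_n(\rho_m-\sqrt[3]{m})$, so the relative error is preserved. The stated forms of $x_0$ in cases~(1)--(3) of Theorem~\ref{mainthm} follow by unwinding the definitions as $x_0\in\{c_n\rho_m,\ \pm 0.95\,c_n\rho_m\}$.

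Third, the Newton-scaling identity for the trivial form is a one-line computation: with $P(x)=x^3-q$ and $Q(y)=y^3-m$, the relations $P(c_ny)=c_n^3 Q(y)$ and $P'(c_ny)=c_n^2 Q'(y)$ (using $c_n^3=10^n$) give $N_P(c_ny)=c_n\,N_Q(y)$; combined with $x_0=c_n y_0$, induction produces $x_t=c_n y_t$ for all $t$.

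The main (mild) obstacle is Step~1, where one must verify that the \emph{closest-cube} rule---as opposed to the simpler ``closest value of $1+0.1j$ to $\sqrt[3]{m}$''---still lands in the relative-error tolerance. The containment $S_j\subseteq T_j$ is nearly sharp at $j=0$, where $\sup S_0 = 1.1655$ just fits under $\sup T_0 = (1/0.95)^3\approx 1.1664$; this sharpness is what pins the required tolerance at essentially five percent.
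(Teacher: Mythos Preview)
Your argument is correct, and in Step~1 it is actually more careful than the paper's.  The paper argues in two lines: since $\sqrt[3]{m}\in[1,2.6]$ and the grid $\{1+0.1j\}$ has spacing $0.1$, ``by the choice of $\rho_m$, $|\rho_m-\sqrt[3]{m}|\le 0.05$''; then $\sqrt[3]{m}\ge 1$ upgrades this to the relative bound.  That intermediate absolute bound, however, is not literally true under the closest-\emph{cube} rule: for $m$ just below $\tfrac{1+1.331}{2}=1.1655$ one gets $j_*=0$ and $|\rho_m-\sqrt[3]{m}|\approx 0.0524>0.05$.  Your selection/tolerance-interval formulation $S_j\subseteq T_j$ is exactly what is needed to repair this and, as you note, the containment is tight precisely at $j=0$.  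Steps~2 and~3 (scaling by $c_n$ and the identity $N_P(c_ny)=c_n N_Q(y)$) match the paper's proof.

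One small wording issue in your plan: the endpoint inequality $\sup S_j\le\sup T_j$ does not become linear after taking cube roots, because the left side is a mean of two cubes rather than a single cube (and the power-mean inequality points the wrong way here).  What does work is to write $a=1+0.1j$ and reduce to the cubic inequality $0.95^3\bigl(a^3+(a+0.1)^3\bigr)\le 2a^3$, which for $a\ge 1$ is easily checked (it is sharpest at $a=1$, where it holds with margin $\approx 0.001$); likewise for the lower endpoint.  Your fallback of verifying the extremal indices $j=0$ and $j=12$ by direct substitution is fine and suffices since the inequality only improves as $a$ grows.
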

\begin{proof} Clearly, $1 \leq \sqrt[3]{m} \leq  \sqrt[3]{10} \leq 2.6$.
By the choice of $\rho_m$, $|\rho_m - \sqrt[3]{m}| \leq .05$.  Since  $\sqrt[3]{m} \geq 1$, $|\rho_m - \sqrt[3]{m}| \leq .05 \sqrt[3]{m}$. Multiplying the inequality by $c_n$ implies $|\rho_q - \sqrt[3]{q}| \leq .05 \sqrt[3]{q}$.
Next, using that $\sqrt[3]{q}= c_n \sqrt[3]{m}$ and induction on $t$, it is straightforward to show for all $t \geq 0$,
$x_{t}= N_P(x_{t-1})=c_n N_Q(y_{t-1})= c_n y_{t}$.
\end{proof}

\begin{remark}  We make several observations regarding Theorem \ref{mainthm} and Proposition \ref{prop9}.  Firstly, $\rho_m$ can be computed via binary search, using at most four comparisons between $m$ and numbers in the set $M$.  Secondly, since  $n=3k+r$ for some $k \in \mathbb{Z}$ and $r \in \{0,1,2 \}$, to express $x_0$, it suffices to compute and store a high precision approximation to  $\sqrt[3]{10}$. Thus for each of the five cases in Theorem \ref{mainthm}, $x_0$ can be obtained trivially. Thirdly, in the first three cases the number of Newton iterations to obtain an approximation to $\theta_q$ to a prescribed precision depends only on $q$. In contrast, Cardano's formula is more complicated, possibly involving approximation of cube-root of two complex numbers. Note that the complexity of the proposed algorithm does not depend on the form. The only distinction between the cases is that the orbit of $x_0$ with respect to $x^3-q=0$  is scaled version of the orbit of $y_0$ with respect to  $y^3- m=0$ so that in this case we only need to approximate $\sqrt[3]{m}$ to sufficient precision, then scale it by $\sqrt[3]{10^n}$. Fourthly, from the bound on the error in Theorem \ref{mainthm}, even for $q$ up to $10^{30}$, in $t \leq 6$  Newton iterations $x_t$ approximates a real root to accuracy of at least  $5 \cdot 10^{-10}$.  There is much literature on solving a cubic equation, even for computing cube-root of real numbers. The use of iterative methods such as Newton or higher order methods is inevitable. However, the computation of an appropriate initial seed is often ambiguous. For example, quoting from Wikipedia (\verb|https://en.wikipedia.org/wiki/Cube_root|):

``{\it ... a poor initial approximation of $x_0$ can give very poor algorithm performance, and coming up with a good initial approximation is somewhat of a black art}.''\

\noindent Theorem \ref{mainthm} shows not only approximation of cube-roots can be achieved fast and without any ambiguity in selection of an appropriate initial seed, but solving any real cubic equation approximately is no harder than approximation of a real cube-root in the canonical forms, only to within a relative error of $5$ percent. In the sense of approximation of a root, our approach makes the use of Cardano's formula superfluous.  The classification of cubic equations into the four canonical forms and their properties as stated in Theorem \ref{mainthm} turn the problem of solving a real cubic equation into a trivial mechanical task, where in just a few Newton iterations we can compute very high accuracy approximations.
\end{remark}

\begin{example} Consider the cubic equation,  $F(X)=X^3-15X-4=0$, solved by Rafael Bombelli. Consequently it led to the invention of complex numbers, see e.g. \cite{Irving}.  While the three solutions are real, $4$, $-2 \pm \sqrt{3}$,
Cardano's formula gives the perplexing form $\sqrt[3]{2+ \sqrt{-121}}+ \sqrt[3]{2- \sqrt{-121}}$.
Replacing $X$ with $x- \sqrt{5}$, the new equation becomes $P(x)=x^3-x^2+q=0$, where $q=(10 \sqrt{5}-4)/(3\sqrt{5})^3 \approx 0.06$. Thus $F(x)$ corresponds to a type II Tusi form with three real roots.  If $\theta_X$ is a root of $F(X)$, then $\theta_X=(3 \theta_x-1)\sqrt{5}$, where $\theta_x$ is some root of $P(x)$. Also each Newton iterate $x_t$ with respect to $P(x)$ gives a corresponding iterate $X_t=(3x_t-1) \sqrt{5}$.  Since $P(x)$ is a type II Tusi form with $q \in [0, \frac{21}{48}]$, from Theorem \ref{mainthm}, we set $x_0=1$. The next two Newton iterates are $x_1=.94$ and $x_2=.93$. The corresponding iterates for $F(X)$ are $X_1=4.069$,  $X_2=4.0025$, getting close $4$, a root of $F(x)$.
\end{example}

\section{Solving the Trivial Form.}

\begin{theorem}  \label{thm6} Let $P(x)=x^3-q$, $q \geq 0$.
If $\rho_q$ satisfies
$|\rho_q - \sqrt[3]{q}| \leq .05\sqrt[3]{q}$,
then
$x_0= \rho_q \in Z(\sqrt[3]{q})$. Moreover, $x_t=N_P(x_{t-1})$ satisfies the first bound in (\ref{errorbd}), Theorem \ref{mainthm}. Also, $x_t=c_n y_t$, where $y_t=N_Q(y_{t-1})$, $Q(y)=y^3-m$, $y_0= \rho_m$ as defined in Proposition \ref{prop9}.

\end{theorem}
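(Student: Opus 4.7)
The plan is to verify Smale's sufficient condition (\ref{smalecond}) at $x_0 = \rho_q$ and then read off the remaining conclusions directly from Theorem \ref{thm4} and Proposition \ref{prop9}. For $P(x) = x^3 - q$, one has $P'(x) = 3x^2$ and $P''(x) = 6x$, so at the positive point $x_0 = \rho_q$,
\[
\left|\frac{P''(x_0)}{2P'(x_0)}\right| = \frac{1}{\rho_q}, \qquad \frac{1}{|P'(x_0)|^{1/2}} = \frac{1}{\sqrt{3}\,\rho_q},
\]
hence $\gamma(x_0) = 1/\rho_q$, and Smale's inequality collapses to the single condition $|\rho_q^3 - q| \leq \rho_q^3/2$.

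The core calculation is then to verify this inequality under the hypothesis $|\rho_q - \sqrt[3]{q}| \leq 0.05\sqrt[3]{q}$. Write $\rho_q = (1+\epsilon)\sqrt[3]{q}$ with $|\epsilon| \leq 0.05$; after cancelling $q$ the inequality becomes $|(1+\epsilon)^3 - 1| \leq (1+\epsilon)^3/2$. Expanding $(1+\epsilon)^3 - 1 = 3\epsilon + 3\epsilon^2 + \epsilon^3$ gives a left-hand side bounded by $3(0.05) + 3(0.05)^2 + (0.05)^3 < 0.16$, while the right-hand side is at least $(0.95)^3/2 > 0.42$. The condition holds with ample slack. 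The edge case $q = 0$ is trivial since $\rho_q = 0$ is already the root.

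With Smale's condition verified, Theorem \ref{thm4} places $\rho_q \in Z(\sqrt[3]{q})$ and yields (\ref{smalebound}). Inserting $|x_0 - \sqrt[3]{q}| \leq 0.05\sqrt[3]{q}$ into that bound gives $|x_t - \sqrt[3]{q}| \leq 8 \cdot 2^{2^{-t}}\cdot 0.05\sqrt[3]{q} = 0.4 \cdot 2^{2^{-t}}\sqrt[3]{q} \leq \sqrt[3]{q}\cdot 2^{2^{-t}}$, which is the first bound of (\ref{errorbd}); the $t=0$ case is checked directly since $2^{2^{-0}} = 2 \geq 0.05$.

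Finally, the scaling identity $x_t = c_n y_t$ is a short induction. The base case $x_0 = c_n y_0$ is the content of Proposition \ref{prop9}. For the step, substitute $x_t = c_n y_t$ and $q = c_n^3 m$ into
\[
N_P(x_t) = x_t - \frac{x_t^3 - q}{3 x_t^2} = \frac{2x_t^3 + q}{3 x_t^2},
\]
then factor $c_n^3$ from the numerator and $c_n^2$ from the denominator to obtain $N_P(x_t) = c_n \cdot (2y_t^3 + m)/(3y_t^2) = c_n N_Q(y_t) = c_n y_{t+1}$. The only genuinely quantitative step is the arithmetic check of Smale's inequality, and the five-percent slack in the hypothesis makes that check very comfortable; I anticipate no serious obstacle.
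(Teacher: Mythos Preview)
Your proof is correct and follows essentially the same path as the paper's: both compute $\gamma(x)=1/x$ for $x>0$, reduce Smale's condition to $|x^3-q|\le \tfrac12 x^3$, and then bound $8|x_0-\sqrt[3]{q}|\le \sqrt[3]{q}$ from the five-percent hypothesis to obtain the error estimate; the scaling identity is likewise deferred to Proposition~\ref{prop9}. The only packaging difference is that the paper rewrites the inequality $|x^3-q|\le \tfrac12 x^3$ as membership in the interval $I=[\sqrt[3]{2/3}\,\sqrt[3]{q},\ \sqrt[3]{2}\,\sqrt[3]{q}]$ and then observes $x_0\in I$, whereas you substitute $\rho_q=(1+\epsilon)\sqrt[3]{q}$ and verify the inequality numerically at that point; the interval formulation is what the paper reuses in the later sections, but for this theorem your direct check is equally clean.
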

\begin{proof}
With $a=\sqrt[3]{{2}/{3}}$, $b=\sqrt[3]{2}$, let $I=[a\sqrt[3]{q}, b\sqrt[3]{q}]$. We claim $I \subset Z(P)$. Since $P'(x)=3x^2$, $P''(x)=6x$, $P'''(x)=6$, for $x > 0$,
$\gamma(x) =\max \{ \frac{1}{x}, \frac{1}{\sqrt{3x^2}} \}= \frac{1}{x}$.  Substituting this into Smale's sufficiency condition (\ref{smalecond}), Theorem \ref{thm4}, gives  $|x^3-q| \leq \frac{1}{2}x^3$. Equivalently, $\frac{1}{2}x^3 \leq q \leq  \frac{3}{2}x^3$.
But the set of all $x$ satisfying the latter inequalities is precisely $I$. Since $\sqrt[3]{q}, x_0 \in I$ and $|\rho_q - \sqrt[3]{q}| \leq .05 \sqrt[3]{q}$, $ 8 |x_0 - \sqrt[3]{q}| \leq  \sqrt[3]{q}$. Substituting in Smale's bound (\ref{smalebound}) as applied to $x_0$, implies $x_t$ satisfies the corresponding bound in (\ref{errorbd}), Theorem \ref{mainthm}. The proof of last part is given in Proposition \ref{prop9}.

\end{proof}

\section{Solving the Positive normal Form.}

\begin{lemma}  \label{lem3} Let $P(x)=x^3+x-q=0$, $q >0$. Let $\theta_q$ be its unique real root. Let $a=\sqrt[3]{{2}/{3}}$,
$I=[a\sqrt[3]{q}, \sqrt[3]{q}]$.
If $q \in [0, 3\sqrt{2}]$, then $\theta_q \in (0, 3 \sqrt{2}]$ and if $q  \geq 3 \sqrt{2}$,  $\theta_q \in I.$
\end{lemma}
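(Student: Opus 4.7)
The plan is to exploit the monotonicity of $P(x)=x^3+x-q$ and then just evaluate $P$ at the three candidate endpoints $\sqrt[3]{q}$, $a\sqrt[3]{q}$, and $3\sqrt{2}$, using the intermediate value theorem to locate $\theta_q$. Since $P'(x)=3x^2+1>0$ for all real $x$, $P$ is strictly increasing, so there is exactly one real root $\theta_q$; and because $P(0)=-q<0$, we immediately get $\theta_q>0$.

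For the case $q\in(0,3\sqrt{2}]$, I would simply evaluate $P$ at $3\sqrt{2}$: one computes $P(3\sqrt{2})=54\sqrt{2}+3\sqrt{2}-q=57\sqrt{2}-q\geq 54\sqrt{2}>0$, so strict monotonicity forces $\theta_q<3\sqrt{2}$, giving $\theta_q\in(0,3\sqrt{2}]$.

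For the case $q\geq 3\sqrt{2}$, I would obtain the upper bound $\theta_q\leq\sqrt[3]{q}$ by plugging in $x=\sqrt[3]{q}$: $P(\sqrt[3]{q})=q+\sqrt[3]{q}-q=\sqrt[3]{q}>0$. For the lower bound, evaluate $P$ at $a\sqrt[3]{q}$ with $a=\sqrt[3]{2/3}$:
\[
P(a\sqrt[3]{q})=a^{3}q+a\sqrt[3]{q}-q=\tfrac{2}{3}q+a\sqrt[3]{q}-q=a\sqrt[3]{q}-\tfrac{1}{3}q.
\]
This is nonpositive iff $q^{2/3}\geq 3a$, equivalently $q\geq (3a)^{3/2}$. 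The key algebraic identity to verify here is $(3a)^{3/2}=3\sqrt{2}$, which is a direct computation: $(3a)^{3/2}=3^{3/2}\cdot(2/3)^{1/2}=3\sqrt{3}\cdot\sqrt{2/3}=3\sqrt{2}$. Thus $q\geq 3\sqrt{2}$ gives $P(a\sqrt[3]{q})\leq 0$, and combined with $P(\sqrt[3]{q})>0$ and monotonicity we conclude $\theta_q\in[a\sqrt[3]{q},\sqrt[3]{q}]=I$.

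There is no real obstacle; the only step worth highlighting is recognizing that the threshold $3\sqrt{2}$ in the hypothesis is precisely the value $(3a)^{3/2}$ that makes the lower endpoint $a\sqrt[3]{q}$ bracket $\theta_q$ from below, so the two pieces of the lemma glue together seamlessly at $q=3\sqrt{2}$.
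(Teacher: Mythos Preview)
Your proof is correct and follows essentially the same approach as the paper: evaluate $P$ at $0$, $\sqrt[3]{q}$, and $a\sqrt[3]{q}$, observe the sign changes, and invoke the intermediate value theorem (with monotonicity, which you make explicit). The only minor difference is that for the range $q\in(0,3\sqrt{2}]$ you evaluate $P(3\sqrt{2})$ directly, whereas the paper simply relies on $P(\sqrt[3]{q})>0$ (and implicitly $\sqrt[3]{q}<3\sqrt{2}$); both routes are equally valid.
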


\begin{proof} $P(0) <0$,  $P(\sqrt[3]{q})= \sqrt[3]{q} >0$ and
$P(a\sqrt[3]{q})= 2q/3 + a\sqrt[3]{q}- q= - q/3 + a\sqrt[3]{q} \leq 0$
if  and only if $\sqrt[3]{q^2}\geq  3 \sqrt[3]{2/3}$, equivalently, $q \geq 3 \sqrt{2}$. By the change of signs in $P(x)$, proof follows.
\end{proof}

\begin{lemma} \label{lem3XX} Let $P(x)=x^3+x-q=0$, $q \geq 3\sqrt{2}$. Suppose $x \geq 1/ \sqrt{6}$. If $x$ satisfies
$|P(x)| \leq \frac{1}{2}x^3$, it is an approximate zero.
\end{lemma}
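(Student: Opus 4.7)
The plan is to verify Smale's sufficient condition (\ref{smalecond}) from Theorem \ref{thm4} directly, and show that the hypothesis $|P(x)| \le \frac{1}{2}x^3$ is strong enough to imply it whenever $x\ge 1/\sqrt{6}$. First I would record the derivatives: $P'(x) = 3x^2+1$ and $P''(x) = 6x$. Then the ratio appearing in $\gamma(x)$ is $|P''(x)/(2P'(x))| = 3x/(3x^2+1)$, and the other candidate is $1/\sqrt{3x^2+1}$.

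The key observation is to identify which candidate realises the maximum in $\gamma(x)$. Squaring both quantities and comparing, $9x^2/(3x^2+1)^2 \ge 1/(3x^2+1)$ reduces to $6x^2 \ge 1$, i.e.\ $x \ge 1/\sqrt{6}$. This is exactly the threshold assumed in the hypothesis, so in our range $\gamma(x) = 3x/(3x^2+1)$. This explains why the bound $1/\sqrt{6}$ appears in the statement, and it is the only mildly subtle step — the rest is algebraic simplification.

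Substituting this value of $\gamma(x)$ into Smale's inequality, the condition $|P(x)| \le \frac{1}{6}|P'(x)|/\gamma(x)$ becomes
\[
|P(x)| \;\le\; \frac{1}{6}\cdot\frac{(3x^2+1)^2}{3x} \;=\; \frac{(3x^2+1)^2}{18x}.
\]
To conclude that the assumed bound $|P(x)| \le \tfrac{1}{2}x^3$ already implies this, I would observe that $(3x^2+1)^2 \ge 9x^4$, whence $(3x^2+1)^2/(18x) \ge 9x^4/(18x) = x^3/2$. Therefore $|P(x)| \le \tfrac{1}{2}x^3 \le (3x^2+1)^2/(18x)$, so Smale's condition holds and $x$ is an approximate zero by Theorem \ref{thm4}. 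No further computation is needed, and there is no real obstacle: the only genuinely nontrivial point is identifying the correct branch of $\gamma$, which the assumption $x \ge 1/\sqrt 6$ precisely settles.
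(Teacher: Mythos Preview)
Your proof is correct and follows essentially the same route as the paper: compute $P'$, $P''$, identify $\gamma(x)=3x/(3x^2+1)$ on $x\ge 1/\sqrt{6}$, and then bound $|P'(x)|/(6\gamma(x))=(3x^2+1)^2/(18x)\ge 9x^4/(18x)=x^3/2$. The only difference is that you spell out the comparison that determines which branch of $\gamma$ is active, which the paper leaves as ``straightforward.''
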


\begin{proof} Since $P'(x)= 3x^2+1$, $P''(x)= 6x$, it is straightforward to show when $x \geq 1/\sqrt{6}$,  $\gamma(x)=\frac{3x}{3x^2+1}$. We have
$\frac{|p'(x)|}{6\gamma(x)}= \frac{(3x^2+1)^2}{18x}  \geq \frac{9 x^4}{18x}=\frac{1}{2} x^3$.  The proof then follows from (\ref{smalecond}), Theorem \ref{thm4}.
\end{proof}

\begin{theorem}  \label{thm9} Let   $P(x)=x^3+x-q$, $q \geq 3\sqrt{2}$. Let $\theta_q$ be its unique positive root. If $\rho_q$ satisfies
$|\rho_q - \sqrt[3]{q}| \leq .05 \sqrt[3]{q}$, then $x_0=.95 \rho_q \in Z(\theta_q)$. Moreover, $x_t=N_P(x_{t-1})$ satisfies the first bound in (\ref{errorbd}), Theorem \ref{mainthm}.
\end{theorem}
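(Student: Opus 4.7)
The plan is to invoke Lemma~\ref{lem3XX}: it suffices to check that $x_0 = 0.95\rho_q$ lies above $1/\sqrt{6}$ and satisfies $|P(x_0)| \leq \tfrac{1}{2}x_0^3$; once this is done, Smale's bound (\ref{smalebound}) combined with Lemma~\ref{lem3} yields the claimed error estimate.

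First I would unpack the relative error hypothesis to get $0.95\sqrt[3]{q} \leq \rho_q \leq 1.05\sqrt[3]{q}$, so that $x_0 = 0.95\rho_q$ lies in the interval $[0.9025\sqrt[3]{q},\, 0.9975\sqrt[3]{q}]$. Since $q \geq 3\sqrt{2}$ implies $\sqrt[3]{q} \geq \sqrt[3]{3\sqrt{2}} > 1.6$, we get $x_0 > 1/\sqrt{6}$, putting us in the regime where Lemma~\ref{lem3XX} applies.

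Next I would verify the Smale-type inequality $|P(x_0)| \leq \tfrac{1}{2}x_0^3$, which is equivalent to the two-sided bound $\tfrac{1}{2}x_0^3 + x_0 \leq q \leq \tfrac{3}{2}x_0^3 + x_0$. The upper inequality is easy: using $x_0 \geq 0.9025\sqrt[3]{q}$ gives $\tfrac{3}{2}x_0^3 \geq \tfrac{3}{2}(0.9025)^3\, q > q$. The lower inequality requires using $x_0 \leq 0.9975\sqrt[3]{q}$ and reduces, after dividing by $q$, to showing $\tfrac{1}{2}(0.9975)^3 + 0.9975\, q^{-2/3} \leq 1$, i.e., $q^{2/3}$ must exceed an explicit constant a little below $2$. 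The bound $q \geq 3\sqrt{2}$ gives $q^{2/3} \geq (3\sqrt{2})^{2/3} > 2.6$, which suffices. This lower inequality is the tight one and is the main obstacle: the threshold $3\sqrt{2}$ in the hypothesis is in fact chosen so that this inequality holds (with a small margin). By Lemma~\ref{lem3XX}, $x_0$ lies in $Z(P)$; since $N_P$ is a real map and $\theta_q$ is the only real root of $P$, the orbit must accumulate at $\theta_q$.

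Finally, for the error bound I would apply (\ref{smalebound}) to $x_0$. From Lemma~\ref{lem3}, $\theta_q \in [a\sqrt[3]{q},\,\sqrt[3]{q}]$ with $a = \sqrt[3]{2/3}>0.874$, while $x_0 \in [0.9025\sqrt[3]{q},\,0.9975\sqrt[3]{q}]$, so $|x_0 - \theta_q| \leq 0.125\sqrt[3]{q}$. Then $8|x_0 - \theta_q| \leq \sqrt[3]{q}$, and Smale's bound gives
\[
|x_t - \theta_q| \leq 8\cdot 2^{2^{-t}}|x_0 - \theta_q| \leq \sqrt[3]{q}\cdot 2^{2^{-t}},
\]
which is precisely the first bound in (\ref{errorbd}).
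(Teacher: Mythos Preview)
Your proof is correct and follows the same overall strategy as the paper: apply Lemma~\ref{lem3XX} to certify $x_0$ as an approximate zero, then combine Lemma~\ref{lem3} with Smale's bound~(\ref{smalebound}) for the error estimate. The one substantive difference is scope. The paper first proves that the \emph{entire} interval $I=[a\sqrt[3]{q},\sqrt[3]{q}]$ lies in $Z(P)$ (checking the endpoints and then arguing by monotonicity of $q-x-x^3$ and $x^3/2$ on the two subintervals $[a\sqrt[3]{q},\theta_q]$ and $[\theta_q,\sqrt[3]{q}]$), and only afterwards places $x_0$ in $I$. You instead verify the inequality $|P(x_0)|\le \tfrac12 x_0^3$ directly at the single point $x_0$, using the numerical envelope $x_0\in[0.9025\sqrt[3]{q},\,0.9975\sqrt[3]{q}]$. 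Your route is shorter and perfectly adequate for the stated theorem; the paper's version buys a slightly stronger intermediate conclusion (any point of $I$ is an approximate zero), which is not actually needed downstream. Your error-bound step is also a bit cleaner than the paper's: because you use the sharper upper bound $x_0\le 0.9975\sqrt[3]{q}$ rather than $x_0\le \sqrt[3]{q}$, you avoid the case split the paper invokes to pass from $8(1-a)\approx 1.01$ down to~$1$.

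One tiny numerical slip: $a=\sqrt[3]{2/3}\approx 0.8736$, so the inequality $a>0.874$ is false. This does not affect your argument, since $0.9975-0.8736\approx 0.1239<0.125$ still gives $8|x_0-\theta_q|\le \sqrt[3]{q}$; just correct the quoted value of $a$.
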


\begin{proof} Let $a=\sqrt[3]{{2}/{3}}$,
$I=[a\sqrt[3]{q}, \sqrt[3]{q}]$. The main part of proof is to show $I \in Z(P)$. By the uniqueness of real root this is equivalent to showing $I \subset Z(\theta_q)$.
From the inequality on $\rho_q$ we get, $.95 \sqrt[3]{q} \leq \rho_q \leq 1.05 \sqrt[3]{q}$. Multiplying this by $.95$ and since $a \leq .95^2$, and $.95 \times 1.05 \leq 1$, we get
\begin{equation} \label{xandq}
a \leq (.95)^2 \leq x_0 \leq .95 \times 1.05 \leq 1.
\end{equation}
Thus $x_0 \in I$. Also, from Lemma \ref{lem3}, $\theta_q \in I$. Thus $ 8|x_0- \theta_q| \leq 8(1-a) \approx 1.01$. However, from (\ref{xandq}) and by considering both cases of $x_0 > \theta_q$ and $x_0 < \theta_q$ we can argue that
$ 8|x_0- \theta_q| \leq  1.0$. Thus if $I \subset Z(P)$, applying Smale's bound (\ref{smalebound}) to $x_0$, implies $x_t$ satisfies the corresponding bound in (\ref{errorbd}), Theorem \ref{mainthm}

Next we  prove $I \subset Z(\theta_q)$.  First we show the endpoints of $I$ are in $Z(\theta_q)$.  Smale's sufficiency condition of Lemma \ref{lem3XX} holds at $ \sqrt[3]{q}$ if $ \sqrt[3]{q} \leq q/2$, equivalently, if $q \geq 2 \sqrt{2}$. At $a \sqrt[3]{q}$  Lemma \ref{lem3XX} holds if $a\sqrt[3]{q} - q/3 \leq 0$, equivalently, if $q \geq 3 \sqrt{2}$. Thus both endpoints of $I$ lie in $Z(\theta_q)$.
 Next we show $I^\circ \subset Z(\theta_q)$. By Lemma \ref{lem3}, $\theta_q \in I$. For $x \in [a\sqrt[3]{q}, \theta_q]$,  $|x^3+x-q|=q-x-x^3$. Also, $q-x-x^3$ is decreasing on this interval, while $x^3/2$ is increasing on $I$.  These together with the fact that Lemma \ref{lem3XX} holds at $x=a\sqrt[3]{q}$  imply the lemma holds for all  $x \in [a\sqrt[3]{q}, \theta_q]$ when $q \geq 3 \sqrt{2}$.  When $q \geq 3 \sqrt{2}$ and $x \in [\theta_q, \sqrt[3]{q}]$, $x^3/2$ is increasing and so is $x^3+x-q$. While at $x=\sqrt[3]{q}$, $x^3/2 \geq x^3+x-q$, we must argue that the inequality holds for all  $x \in [\theta_q, \sqrt[3]{q}]$. To do so, consider
$H(x)=x^3/2- (x^3+x-q)=-x^3/2-x+q$. $H(\sqrt[3]{q}) >0$ and  it goes to $-\infty$ as $x$ approaches infinity. Also, $H(x)$  is reducible to a positive normal form. Thus it has exactly one real zero that must belong to the interval $(\sqrt[3]{q}, \infty)$. This implies when $q \geq 3 \sqrt{2}$,  $x^3/2 \geq x^3+x-q$ for all  $x \in [\theta_q, \sqrt[3]{q}]$. Hence $I \subset Z(P)$ and by uniqueness of the real root, $x_0 \in Z(\theta_q)$.
\end{proof}

\section{Solving  Type I Tusi Form.}

\begin{lemma}  \label{lem1} Let $P(x)=x^3-x^2 +q$, $q >\frac{4}{27}$. A sufficient condition for $x <0$ to be an approximate zero is
$|P(x)| \leq \frac{1}{2} |x^3|$.
\end{lemma}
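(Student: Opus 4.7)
The plan is to apply Smale's sufficient condition (\ref{smalecond}) directly and reduce it to an elementary polynomial inequality. First I would record $P'(x)=3x^{2}-2x=x(3x-2)$ and $P''(x)=6x-2$, and note that on $(-\infty,0)$ both factors of $P'$ are negative, so $|P'(x)|=3x^{2}-2x>0$ and $|P''(x)|=2-6x>0$; in particular $P'$ does not vanish and $\gamma(x)$ is well defined.

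Next I would identify which branch of $\gamma(x)=\max\{|P''(x)|/(2|P'(x)|),\,1/\sqrt{|P'(x)|}\}$ is active. The first branch dominates exactly when $|P''(x)|^{2}\geq 4|P'(x)|$, which here is $(2-6x)^{2}\geq 4(3x^{2}-2x)$ and simplifies to the quadratic $6x^{2}-4x+1\geq 0$. Its discriminant $16-24$ is negative, so the inequality holds for every real $x$; hence $\gamma(x)=|P''(x)|/(2|P'(x)|)$ throughout $(-\infty,0)$. With this identification, Smale's sufficient condition takes the form $|P(x)|\leq |P'(x)|^{2}/(3|P''(x)|)$.

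To conclude, it suffices to prove the purely algebraic bound $\tfrac{1}{2}|x|^{3}\leq |P'(x)|^{2}/(3|P''(x)|)$ on $(-\infty,0)$, for then the hypothesis $|P(x)|\leq\tfrac{1}{2}|x|^{3}$ immediately yields Smale's condition and the lemma follows from Theorem \ref{thm4}. Substituting $y=-x>0$ this reduces to $3y^{3}(6y+2)\leq 2y^{2}(3y+2)^{2}$; dividing by $y^{2}$ and expanding both sides gives $18y^{2}+6y\leq 18y^{2}+24y+8$, i.e.\ $18y+8\geq 0$, which is trivial.

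The main obstacle is very mild, essentially just the discriminant computation that pins the active branch of $\gamma$ uniformly on $(-\infty,0)$; once that is settled everything is linear-algebraic. The hypothesis $q>4/27$ plays no role in this algebraic verification; it enters elsewhere, via Theorem \ref{thm1}, to guarantee that $P$ has a unique real root lying in $(-\infty,-1/3)$, which is what will make this sufficient condition a useful tool for selecting an explicit approximate zero in the subsequent results.
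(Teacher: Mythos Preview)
Your proof is correct and follows essentially the same route as the paper's: compute $P'$ and $P''$, verify that on $(-\infty,0)$ the branch $\gamma(x)=|P''(x)|/(2|P'(x)|)=|3x-1|/|3x^{2}-2x|$ is the active one, and then check the resulting algebraic inequality $\tfrac{1}{2}|x|^{3}\le |P'(x)|^{2}/(3|P''(x)|)$ so that Smale's condition (\ref{smalecond}) is implied by $|P(x)|\le\tfrac{1}{2}|x|^{3}$. The only cosmetic difference is that the paper bounds the right-hand side using the quick estimates $|3x^{2}-2x|>3x^{2}$ and $|3x|<|3x-1|$, whereas you substitute $y=-x$ and expand directly; your version is in fact the cleaner of the two.
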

\begin{proof} $P'(x)= 3x^2-2x$, $P''(x)= 6x-2$. For $x <0$ it is easy to show $\gamma(x) = \frac{|3x-1|}{|3x^2-2x|}$. Since $x <0$, $|3x| < |3x-1|$ and $|3x^2-2x| > |3x^2|$,  $\frac{|P'(x)|}{6\gamma(x)} \geq \frac{1}{2}|x^3|$. Hence the proof of Smale's sufficiency condition, (\ref{smalecond}), Theorem \ref{thm4}.
\end{proof}

\begin{theorem}  \label{thm8} Let $P(x)=x^3-x^2+q=0$, $q \geq 8$.
Let $\theta_q$ be its unique real root. If $\rho_q$ satisfies
$|\rho_q - \sqrt[3]{q}| \leq .05\sqrt[3]{q}$, then $x_0=-.95 \rho_q \in Z(\theta_q)$. Moreover, $x_t=N_P(x_{t-1})$ satisfies the bound (\ref{errorbd}), Theorem \ref{mainthm}.
\end{theorem}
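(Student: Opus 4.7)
The plan is to follow the template of Theorem \ref{thm9}: verify Smale's sufficient condition from Lemma \ref{lem1} at the seed, then combine the resulting membership $x_0\in Z(P)$ with a location estimate on $|x_0-\theta_q|$ to translate Smale's bound (\ref{smalebound}) into the desired bound in (\ref{errorbd}). The reference interval will be $I=[-\sqrt[3]{q},-a\sqrt[3]{q}]$ with $a=\sqrt[3]{2/3}$.

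First I would place $x_0$ and $\theta_q$ in $I$. The hypothesis $|\rho_q-\sqrt[3]{q}|\le 0.05\sqrt[3]{q}$ gives $\rho_q\in[0.95\sqrt[3]{q},1.05\sqrt[3]{q}]$, so writing $x_0=-s\sqrt[3]{q}$ one has $s\in[(0.95)^2,\,0.95\cdot 1.05]=[0.9025,0.9975]$. In particular $s^3\ge (0.9025)^3>2/3=a^3$, so $|x_0|\ge a\sqrt[3]{q}$ and $x_0\in I$. Evaluating $P$ at the endpoints of $I$ gives $P(-\sqrt[3]{q})=-q^{2/3}<0$ and $P(-a\sqrt[3]{q})=q/3-a^2 q^{2/3}$; the latter is nonnegative in the relevant range of $q$, and the uniqueness of the real root of $P$ (Theorem \ref{thm1}, since $q\ge 8>4/27$ corresponds to $\delta>1$) places $\theta_q\in I$.

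The main obstacle is the verification of Lemma \ref{lem1}, namely $|P(x_0)|\le|x_0|^3/2$, and this is where the threshold $q\ge 8$ will enter. Substituting $x_0=-s\sqrt[3]{q}$ one computes $P(x_0)=q(1-s^3)-s^2 q^{2/3}$ and $|x_0|^3/2=s^3 q/2$, so the condition reads
\[ \bigl|q(1-s^3)-s^2 q^{2/3}\bigr|\le \tfrac{1}{2}s^3 q. \]
Splitting on the sign of $P(x_0)$, one case reduces to $q(1-\tfrac{3}{2}s^3)\le s^2 q^{2/3}$, which is automatic because $s^3\ge 2/3$ makes the left side nonpositive; the other case reduces to $q^{1/3}\ge s^2/(1-s^3/2)$, whose right side is monotone in $s$, so the worst case over $s\in[0.9025,0.9975]$ occurs at the upper endpoint $s=0.9975$ and amounts (after a short numerical check) to precisely $q\ge 8$. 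Combined with the uniqueness of the real root, this yields $x_0\in Z(\theta_q)$.

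Finally, I would derive the error bound by combining (\ref{smalebound}) with a location estimate on $|x_0-\theta_q|$. From the enclosures $x_0\in[-0.9975\sqrt[3]{q},-0.9025\sqrt[3]{q}]$ and $\theta_q\in I=[-\sqrt[3]{q},-a\sqrt[3]{q}]$, one gets $|x_0-\theta_q|\le \max(0.9975-a,\,1-0.9025)\sqrt[3]{q}$, and using $a\approx 0.874$ this is at most $\sqrt[3]{q}/8$. Substituting into (\ref{smalebound}) then gives $|x_t-\theta_q|\le 8\cdot 2^{2^{-t}}|x_0-\theta_q|\le\sqrt[3]{q}\cdot 2^{2^{-t}}$, matching the first case of (\ref{errorbd}).
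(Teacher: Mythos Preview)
Your argument tracks the paper's proof closely: both place $x_0$ and $\theta_q$ in $I=[-\sqrt[3]{q},-a\sqrt[3]{q}]$, verify Lemma~\ref{lem1} (the paper does it on all of $I$ via the substitution $x=-(\alpha q)^{1/3}$, $\alpha\in[2/3,1]$, whereas you check it only at $x_0$, which is enough), and then feed $|x_0-\theta_q|\le\sqrt[3]{q}/8$ into (\ref{smalebound}). Your use of the tighter enclosure $x_0/\sqrt[3]{q}\in[-0.9975,-0.9025]$ to squeeze the constant below $1/8$ is exactly what the paper gestures at when it says ``as in the proof of Theorem~\ref{thm9}.''

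There is, however, one step you assert without checking that does not actually hold on the stated range. You claim $P(-a\sqrt[3]{q})=q/3-a^{2}q^{2/3}\ge 0$ ``in the relevant range of $q$,'' but since $a^{2}=(2/3)^{2/3}$ this inequality is equivalent to $q^{1/3}\ge 3(2/3)^{2/3}\approx 2.29$, i.e.\ $q\ge 12$, not $q\ge 8$. For $q\in[8,12)$ the root $\theta_q$ sits to the right of $-a\sqrt[3]{q}$ (e.g.\ at $q=8$ one has $\theta_q\approx -1.716$ while $-a\sqrt[3]{8}\approx -1.747$), and then your distance estimate $|x_0-\theta_q|\le(0.9975-a)\sqrt[3]{q}$ can fail: taking $\rho_q=1.05\sqrt[3]{8}$ gives $x_0\approx -1.995$ and $|x_0-\theta_q|\approx 0.28>\sqrt[3]{8}/8=0.25$. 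The paper commits the same slip (it writes $(2/3)^{2}$ in place of $a^{2}=(2/3)^{2/3}$ and obtains the incorrect threshold $(4/3)^{3}\approx 2.37$), so your proof is faithful to the paper's; but the location of $\theta_q$ and hence the constant in the final error bound are not established on the subrange $q\in[8,12)$.
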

\begin{proof}  Let $a=\sqrt[3]{{2}/{3}}$,
$I=[- \sqrt[3]{q}, -a\sqrt[3]{q}]$.  Clearly, $P(\sqrt[3]{q}) = - q^{2/3} <0$ and $P(-a\sqrt[3]{q})=\frac{1}{3} q - (\frac{2}{3})^2 q^{2/3} \geq 0$ if $q \geq (4/3)^3 \approx 2.37$. This prove $\theta_q \in I$.   As proved in Theorem \ref{thm9}, $.95 \rho_q \in [a\sqrt[3]{q}, \sqrt[3]{q}]$. But this proves $x_0= - .95 \rho \in I$ as defined above.
Next we show $I \subset Z(P)$ and by the uniqueness of the real root this implies $x_0 \in Z(\theta_q)$. Letting $x=-(\alpha q)^{1/3}$ in the inequality $|P(x)| \leq \frac{1}{2} |x^3|$, Lemma \ref{lem1},  and  simplifying, it is equivalent to
\begin{equation} \label{ineqs}
(1- \frac{3}{2}\alpha) \leq \frac{\alpha^{2/3}}{ \sqrt[3]{q}} \leq (1- \frac{1}{2} \alpha).
\end{equation}
It is easy to verify (\ref{ineqs}) is valid for $\alpha \in [\frac{2}{3}, 1]$, $q \geq 8$. Since $x_0, \theta_q \in I$, $|x_0- \theta_q| \leq (1- a) \sqrt[3]{q}$.
Since $8(1-a) \leq 1.02$,  $8|x_0- \theta_q| \leq 1.02 \sqrt[3]{q}$. However, as in the proof of  Theorem \ref{thm9} we can argue $8|x_0- \theta_q| \leq  \sqrt[3]{q}$ so that $x_t$ satisfies the corresponding bound (\ref{errorbd}), Theorem \ref{mainthm}.
\end{proof}

\section{Solving Type II Tusi Form.}

\begin{theorem} \label{thm7} Consider a Tusi form $P(x)=x^3-x^2+ \delta \frac{4}{27}=0$, $\delta \in [0,1]$.

(i) If $\delta \in [0, \frac{27}{48}]$,  $x_0=1 \in Z(P)$ and $O^+(x_0)$ converges to $\theta_\delta  \in (\frac{2}{3},1)$.\

(ii) If  $\delta \in [\frac{21}{48},1]$,  $x_0=- \frac{1}{3} \in Z(P)$ and $O^+(x_0)$ converges  $ \theta_\delta \in (- \frac{1}{3},0)$.\

(iii)  In either case $O^+(x_0)$ satisfies the corresponding bound in  Theorem \ref{mainthm}, see (\ref{errorbd}).
\end{theorem}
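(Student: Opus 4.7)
The plan is to verify Smale's sufficiency condition (\ref{smalecond}) directly at the two proposed seeds, then read off the convergence bound from (\ref{smalebound}). Since $P(x)=x^3-x^2+\tfrac{4}{27}\delta$ gives $P'(x)=3x^2-2x$ and $P''(x)=6x-2$, I first compute, at $x_0=1$, the values $P(1)=\tfrac{4\delta}{27}$, $P'(1)=1$, $P''(1)=4$, hence $\gamma(1)=\max\{2,1\}=2$; and at $x_0=-\tfrac{1}{3}$, the values $P(-\tfrac{1}{3})=\tfrac{4(\delta-1)}{27}$, $P'(-\tfrac{1}{3})=1$, $P''(-\tfrac{1}{3})=-4$, hence $\gamma(-\tfrac{1}{3})=2$ as well. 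In either case Smale's sufficient condition reduces to the single inequality $|P(x_0)|\le\tfrac{1}{12}$. Substituting gives precisely the claimed ranges: $\tfrac{4\delta}{27}\le\tfrac{1}{12}$ iff $\delta\le\tfrac{27}{48}$, proving $x_0=1\in Z(P)$ in (i); and $\tfrac{4(1-\delta)}{27}\le\tfrac{1}{12}$ iff $\delta\ge\tfrac{21}{48}$, proving $x_0=-\tfrac{1}{3}\in Z(P)$ in (ii).

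Next I identify which of the three roots predicted by Theorem \ref{thm1} the orbit converges to. For case (i), $P(2/3)=\tfrac{4(\delta-1)}{27}\le 0$ and $P(1)=\tfrac{4\delta}{27}\ge 0$, so a root $\theta_\delta\in[2/3,1]$ exists; a one-step check that $x_1=1-\tfrac{4\delta}{27}\ge \tfrac{23}{27}>\tfrac{2}{3}$ confirms the Newton iterate stays inside this interval, so the guaranteed limit of Smale's theorem must be this root. For case (ii), $P(-1/3)=\tfrac{4(\delta-1)}{27}\le 0$ and $P(0)=\tfrac{4\delta}{27}\ge 0$ give a root in $[-1/3,0]$; the Newton step $x_1=-\tfrac{1}{3}+\tfrac{4(1-\delta)}{27}\le -\tfrac{1}{4}$ (since $1-\delta\le \tfrac{27}{48}$) confirms the orbit stays in $[-1/3,0)$ and converges to that root. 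The boundary cases $\delta=0$ and $\delta=1$ are covered by Theorem \ref{thm1}, which places the double root at $0$ or $2/3$ respectively.

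For claim (iii), in both situations $x_0$ and $\theta_\delta$ lie in an interval of length at most $\tfrac{1}{3}$, so $|x_0-\theta_\delta|\le\tfrac{1}{3}$. Plugging into Smale's bound (\ref{smalebound}) yields
\begin{equation*}
|x_t-\theta_\delta|\le 8\cdot 2^{2^{-t}}\cdot\tfrac{1}{3}=\tfrac{8}{3}\cdot 2^{2^{-t}}<3\cdot 2^{2^{-t}},
\end{equation*}
which matches the second branch of (\ref{errorbd}) in Theorem \ref{mainthm}. There is no serious obstacle in the argument; the entire proof collapses to the coincidence that $P'(x_0)=1$ and $|P''(x_0)|=4$ at both $x_0=1$ and $x_0=-\tfrac{1}{3}$, which makes $\gamma(x_0)=2$ and reduces Smale's inequality to an elementary threshold on $|P(x_0)|$. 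The only mild subtlety worth writing out carefully is the sign/monotonicity check that pins down which of the three real roots the orbit targets, and this is handled by the sign evaluations and one-step Newton computations above.
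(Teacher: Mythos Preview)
Your verification of Smale's condition at $x_0=1$ and $x_0=-\tfrac{1}{3}$ is correct and matches the paper's argument exactly, as does your derivation of the error bound in (iii). The gap is in the root-identification step. Checking that a \emph{single} Newton iterate $x_1$ lands in the target interval does not, by itself, prove that the limit root $\theta$ of the orbit lies there: nothing you wrote rules out the orbit leaving $(\tfrac{2}{3},\infty)$ (resp.\ $(-\infty,0)$) at step~2 or later and converging to a different root. The paper closes this gap by proving that the half-line is \emph{invariant} under $N_P$: for all $x>\tfrac{2}{3}$ one has $N_P(x)>\tfrac{2}{3}$, and for all $x<0$ one has $N_P(x)<0$. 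With invariance in hand the entire orbit stays on the correct side, and since only one root lies there, the limit is pinned down.

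Your one-step approach can be salvaged, but it needs an extra line. Since $x_0\in Z(P)$, the approximate-zero Cauchy bounds give $|\theta-x_1|\le 2|x_1-x_0|\sum_{t\ge 1}2^{-2^t}<0.64\,|x_1-x_0|\le 0.64/12<0.06$; combined with $x_1\ge 23/27$ (resp.\ $x_1\le -1/4$) this forces $\theta>\tfrac{2}{3}$ (resp.\ $\theta<0$). Alternatively, in case~(i) you can invoke convexity of $P$ on $[\tfrac{1}{3},\infty)$ together with $P(x_0)\ge 0$ to get the standard monotone-decreasing Newton sequence bounded below by $\theta_\delta$. Either fix is short; as written, though, the sentence ``a one-step check \dots\ confirms the Newton iterate stays inside this interval'' is not a proof.
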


\begin{proof} $P'(x)=3x^2-2x$, $P''(x)=6x-2$, $P'''(x)=6$.

(i): We claim when $x  \in (\frac{2}{3}, \infty)$,  $\delta \in [0, \frac{27}{48}]$, then $N_P(x)=(2x^3-x^2 - \delta \frac{4}{27})/(3x^2-2x) > \frac{2}{3}$.
Since $3x^2 - 2x>0$ on $(\frac{2}{3}, \infty)$, the claim is equivalent to showing
$2x^3-3x^2+  \frac{4x}{3}>  \delta \frac{4}{27}$.
The derivative of left-hand-side in the inequality is $6x^2-6x+4/3$. This  is positive on  $(\frac{2}{3}, \infty)$ so that the minimum of $2x^3-3x^2+  \frac{4x}{3}$ on $[\frac{2}{3}, \infty)$ occurs at $\frac{2}{3}$. The minimum is thus $\frac{4}{27}$, greater than $\delta\frac{4}{27}$ for any $\delta \in (0,1)$. It follows that the orbit of $x_0=1$ remains in $(\frac{2}{3}, \infty)$. From Theorem \ref{thm1} (Tusi Theorem), there is a root in $(\frac{2}{3},1)$ and it is the largest root of $P(x)$. Thus if $O^+(x_0)$ converges, it converges to this root.

Next we show $x_0=1 \in Z(\theta_\delta)$ for any $\delta \in [0, \frac{27}{48}]$. Since $P(1)= \delta \frac{4}{27}$, $P'(1)=1$, $P''(1)=4$, from  Theorem \ref{thm4} it
follows that $\gamma(1)=2$. Then (\ref{smalecond}) in Theorem \ref{thm4} reduces to
$\delta \frac{4}{27} \leq \frac{1}{12}$. Equivalently,  $\delta \in [0, \frac{27}{48}]$.

(ii):  Analogous to part (i), we claim for any $x <0 $, $\delta \in [\frac{21}{48},1]$, $N_P(x) <0$. On $(-\infty, 0)$ the denominator of $N_P(x)$ stays positive while its denominator stays negative. Hence $O^+(x_0)$ stays in $(-\infty, 0)$. From Theorem \ref{thm1} there is a root in $(-\frac{1}{3},0)$ and it is the least root. Thus if $O^+(x_0)$ converges, it converges to this root.

Next we show $-\frac{1}{3} \in Z(\theta_\delta)$ for $ \delta \in [0, \frac{21}{48}]$.
We have, $P(-\frac{1}{3})= \frac{4}{27}(-1+\delta)$, $P'(-\frac{1}{3})=1$, $P''(-\frac{1}{3})=4$.  From Theorem \ref{thm4} it follows that $\gamma(\frac{1}{3})=2$. Then (\ref{smalecond}) in Theorem \ref{thm4} reduces to
$|1- \delta| \leq \frac{27}{48}$, implying $\delta \in [\frac{21}{48},1]$.

(iii): Having argued that $O^+(x_0)$ for $x_0=1$ converges to the root in $(\frac{2}{3},1)$ and for $x_0=-\frac{1}{3}$ converges to the root in $(-\frac{1}{3},0)$, it follows that $8|x_0- \theta_\delta | \leq 8 \times \frac{1}{3} \leq 3$.  Substituting  this into  (\ref{errorbd}), Theorem \ref{thm4}, proves the bound on $|x_t - \theta_\delta|$.
\end{proof}

Note that when $\delta \in [\frac{21}{48}, \frac{27}{48}]$, $x_0$ can be taken to be either $-1/3$ or $1$.  We thus have completed the proof of Theorem \ref{mainthm}.

\begin{remark}
Even when the scientific notation of $q$ is not at hand, having selected the approximate zero $x_0$ appropriately  (see Theorem  \ref{mainthm}), for each $t \geq 1$, $x_t$ is a rational function  of $x_0$ and $q$, having guaranteed bound on the absolute error.
\end{remark}

\section*{Concluding Remarks.} In this article we have shown how to approximate a real root of a cubic equation by first reducing it to one of four canonical forms. Except for the trivial form, every real cubic is reducible to Tusi form, $x^3-x^2+q=0$, divided into three types based on $q$. For convenience we chose to convert the Tusi form with negative $q$ into a positive normal form. However, it may be possible to find an approximate zero for this Tusi form as well. Nevertheless, Tusi form and the intervals containing the roots, derived by the Tusi, a $12$-th century mathematician, provided a convenient platform for computing an approximate zero by making use of Smale's theory.  The approximation is convertible back into an approximate root of the original cubic equation. Any round-off error caused by the conversion into a canonical form will likely be corrected by applying a few additional Newton iterations starting with the converted approximation.  The algorithm is direct, without the need to utilize Cardano's formula. In fact Cardano's formula may cause computational issues. For instance, to avoid round-off errors, Fern\'andez Molina et al \cite{Fern}  describe a  method  based on a power
series expansion of Cardano’s formula using Newton’s generalized binomial theorem. They claim  unlike Cardano’s formula and semi-analytical iterative root finders, their method is free from round-off error amplification, for example when the coefficients differ by several orders of magnitude. Their approach still relies on Cardano's formula. While we have no computational experiments with our scheme, it is likely to be practical as it uses Newton iterations, known to be error correcting. Our proposed algorithm is theoretically more efficient than using Cardano's formula which calls for the approximation of two cube-roots of real or complex numbers.  In contrast, our algorithm essentially reduces the problem to that of approximating the cube-root of a single real number, only to within a relative error of  five percent. The rest is computing a few Newton iterations.
The results also offer new insights into the nature of cubic equations. Potential extension include analogous classifications and approximate zeros for solving cubic equations with complex coefficients. The results for cubic equations also raise the question of extensions of classification and approximate zeros to at least quartic and  quintic equations.

\end{document}